\pgfplotsset{compat=newest}
\newcolumntype{C}[1]{>{\centering\let\newline\\\arraybackslash\hspace{0pt}}m{#1}}
\newcommand{\R}{\mathbb R}
\newcommand{\bs}{\mathbf s}
\newcommand{\bc}{\mathbf c}
\newcommand{\bv}{\mathbf v}
\newcommand{\bw}{\mathbf w}
\newcommand{\cO}{\mathcal O}
\newcommand{\cZ}{\mathcal Z}
\newcommand{\nn}{\mathbb{N}}
\newtheorem{assumption}{Assumption}[section]
\newtheorem{remark}{Remark}[section]
\newtheorem{example}{Example}[section]
\numberwithin{equation}{section}
\begin{document}

\title{Analysis of a  Schwarz-Fourier domain decomposition method}
\author{
Arnold Reusken\thanks{Institut f\"ur Geometrie und Praktische  Mathematik, RWTH-Aachen
University, D-52056 Aachen, Germany (reusken@igpm.rwth-aachen.de).}
}
\maketitle
\begin{abstract}
  The Schwarz domain decomposition method can be used for approximately solving a Laplace equation on a domain formed by the union of two overlapping discs. We consider an inexact variant of this method in which the subproblems on the discs are solved approximately using the projection on a Fourier subspace of the $L^2$ space on the boundary. This model problem is relevant for better understanding of the ddCOSMO solver that is used in computational chemistry. We analyze convergence properties of this Schwarz-Fourier domain decomposition method. The analysis is based on maximum principle arguments. We derive a new variant of the maximum principle and  contraction number bounds in the maximum norm.  
\end{abstract}
\begin{AMS}
 65N15, 65N35, 65N80
\end{AMS}

\begin{keywords}
  domain decomposition, Schwarz method, Fourier approximation, convergence analysis.
\end{keywords}

\section{Introduction}
Let $\Omega \subset \R^d$, $d=2,3$, be a domain formed by the union of two overlapping open \emph{balls} $B_i$, i.e,  $\Omega =  B_1 \cup B_2$, 
and such that $B_1 \cap B_2 \neq \emptyset$, $\Omega \neq B_i$ for $i=1,2$. For given boundary data  $g$,  the Laplace problem 
\begin{equation} \label{eqLaplace}
  \Delta u = 0 \quad \text{in}~~\Omega,\quad u=g \quad \text{on}~~\partial \Omega,
\end{equation}
can be   approximately solved using  the multiplicative or additive Schwarz domain decomposition method, denoted by Schwarz-DD, which iterates over the balls and solves only Laplace equations on the balls $B_i$. Convergence of this basic method can be proved using established techniques, cf.~\cite{WidlundDD, XuSIAMReview}. The Laplace subproblems on the balls $B_i$ can be solved approximately  in a very efficient way using Fourier series ($d=2$) or spherical harmonics ($d=3$). This basic idea has lead to the development of new numerical simulation methods for a class of implicit solvation models, more precisely, for the COnductor-like Screening MOdel (COSMO)~\cite{Klamt}, which is a particular type of  continuum solvation model. Such models play a fundamental role  in computational chemistry.
In a nutshell, such models account for the mutual polarization between a solvent, described by an infinite continuum, and a charge distribution of a given solute molecule of interest.  We refer to the review articles~\cite{lipparini2016,tomasi2005quantum} for a thorough introduction to continuum solvation models. 
The COSMO approach uses the so-called van der Waals cavity, which models the solute's cavity as a union of balls, each of them centered around an atom. The solution of  a Laplace equation on such a cavity can be approximated using a Schwarz domain decomposition method. This  so-called ddCOSMO solver has been introduced in ~\cite{CaMaSt13} and is further extended in several directions, cf. \cite{Stammetal,Quan2018} for an overview.  This approach has attracted much attention due to its impressive efficiency.
From a numerical analysis point of view this Schwarz-DD technique raises several fundamental questions. For example, for the case of many balls, one is interested in how  the convergence of the Schwarz-DD method depends on the domain geometry, in particular the number of balls and the size of the overlap. This topic is addressed in \cite{CiGa17,Gander2018,CiGa18b,ReSt21}. The results presented in these papers yield contraction number  bounds (in the $H^1$-norm or the $L^\infty$-norm) that show the dependence of the rate of convergence on the number of balls and on the ``globularity'' of the domain.
For the analysis of  Schwarz domain decomposition methods two very different approaches are available, introduced in the seminal work \cite{Lions1988,Lions1989}. The first one is based on a variational formulation and uses the theory of alternating projections in Hilbert spaces cf.~\cite{XuSIAMReview,WidlundDD}. In this approach, used in \cite{CiGa18b,ReSt21}, a Laplace  problem as in \eqref{eqLaplace} is transformed to a Poisson equation with homogeneous  boundary data and the error contraction is measured in the $H^1_0(\Omega)$ norm. The second approach, used in \cite{CiGa17,Gander2018},  is based on the maximum principle and the error contraction is measured in the $L^\infty(\Omega)$ norm.   In all these papers,  convergence issues are studied under the essential assumption  that the subproblems on the balls a \emph{solved exactly}. For the  high efficiency of the ddCOSMO technique, however, it is essential that  the subproblems are solved  \emph{approximately} using  spherical harmonics ($d=3$). In practice relatively low dimensional spherical harmonics spaces turn out to be sufficient for obtaining satisfactory discretization accuracy.  This brings us to a second fundamental question, which motivates  the  topic of this paper: \emph{what are the  convergence properties of the Schwarz-DD if a finite (low) number of Fourier modes ($d=2$) or spherical harmonics ($d=3$) is used to approximate the solution of the Laplace problem on each of the balls.} In the two-dimensional case ($d=2$) we call such a Schwarz-DD method with a finite Fourier approximation on each of the discs  the Schwarz-Fourier-DD method. \\
Numerical experiments show that, also if one uses a small number of  Fourier modes, the method typically converges with a rate that is similar to that of the method with exact solves.   As far as we know the only paper in which the above question is addressed is the recent work \cite{CaHaJhSt24}. In that paper a setting as in \eqref{eqLaplace} with two overlapping discs ($d=2$) is considered and new geometry-dependent estimates for the $L^2$-norm and spectral radius of a Dirichlet to Dirichlet 
operator are derived. This operator maps data on the boundary of a disc to the restriction of its harmonic extension along
circular arcs inside the disc. Using such estimates, convergence results for the Schwarz-Fourier-DD method are derived, e.g. a geometry dependent bound, smaller than one,  for the spectral radius of the error iteration operator and an error contraction   result in the $L^2$-norm for certain geometry configurations (with sufficiently small overlap). In that paper the $L^2$-norm is used, which is natural for the analysis of Fourier approximations.   It is not clear how a convergence analysis of the Schwarz-Fourier-DD method based on the alternating projection technique, using the $H^1(\Omega)$ norm, can be developed. A key difficulty is that in the method one has to deal with discontinuous boundary data that are not in $H^\frac12(\partial B_i)$.  Note that the maximum principle applies to discontinuous boundary data, too.  In this paper we use the technique based on the maximum principle, as introduced in \cite{Lions1989}, to analyze convergence of the Schwarz-Fourier-DD method. \\ 
We study the same basic problem as in  \cite{CaHaJhSt24}, namely the Laplace problem on two overlapping discs \eqref{eqLaplace}. For $B_1$ we take the unit disc. For approximation of the boundary data on $\partial B_i$ we use the real Fourier space of sine and cosine functions with frequency at most $N \in \mathbb{N}$. The dimension of this discretization space is $2N+1$. Solving the Laplace problem on $B_i$ using a finite Fourier sum is represented by $\Delta_i^{-1}P_N^i$, $i=1,2$, with $\Delta_i^{-1}$ the harmonic extension on $B_i$ and $P_N^i$ the Fourier projection of the boundary data. We will show that this operator can be represented as a convolution of the boundary data with a kernel $K_N$ that is an approximation of the Poisson kernel $K$. A first main result that we derive is the positivity of this kernel $K_N$ on a subdomain of $B_1$ formed by all points with distance larger than $\sim \frac{\ln N}{N}$ to the boundary $\partial B_1$ (a similar result holds for $B_2$). This implies a variant of the maximum principle that is a key ingredient in the convergence analysis of the  Schwarz-Fourier-DD method. Based on this we obtain some partial convergence result. In the second part of the paper we consider a Schwarz-Fourier-DD in  which the Fourier projection is replaced by a nodal interpolation in Fourier space, which corresponds to using a discrete Fourier transform. The latter allows a very efficient implementation and we can avoid discontinuities at the intersection points. For this variant we  determine explicit bounds for the maximum norm of the corresponding iteration matrix  hat can be computed numerically.  The results show that, also for small $N$ values, these bounds are very similar to a $L^\infty$-contraction number bound of the Schwarz-DD method applied to the continuous problem. We include results of numerical experiments to  study sharpness of bounds and to illustrate error propagation properties of the methods.

\section{Preliminaries} \label{sectpreliminaries}
We consider the Laplace problem on the unit disc $B=\{\, x \in \R^2\,|\, |x|<1\,\}$ with piecewewise continuous boundary data $g$ on $\partial B$.
The boundary $\partial B$ is parameterized by the angle $\theta \in [0,2\pi)$. We assume that $g$ has at most finitely many points $z_1,\ldots z_m \in [0,2 \pi)$, at which $g$ is discontinuous.  The set of discontinuity locations is denoted by $\cZ=\{z_1,\ldots,z_m\}$.    We assume that the left and right limits at these points exist: $\lim_{\theta \uparrow z_i} g(\theta)=g_i^-$, $\lim_{\theta \downarrow z_i} g(\theta)=g_i^+$. The case $\cZ=\emptyset$ is  allowed. The space of such functions $g$ is denoted by $C_\cZ$. We study the problem
\begin{equation} \label{Laplace} 
\begin{split}
 \Delta u &=0 \quad \text{in}~B \\
   u& =g \quad \text{on}~\partial B.
   \end{split}
\end{equation}
We collect some basics concerning the solution of this problem in terms of Fourier series. The Fourier coefficients of $g$ are given by
\[
 A_n=\frac{1}{\pi} \int_0^{2 \pi} g(\theta) \cos(n\theta) \, d\theta, 
 \quad B_n=\frac{1}{\pi} \int_0^{2 \pi} g(\theta) \sin(n\theta) \, d\theta, \quad n=0,1,2, \ldots,
\]
and the corresponding Fourier series is
\begin{equation} \label{finitesum}
  S_N(\theta)= \tfrac12  A_0 +\sum_{n=1}^N A_n \cos(n\theta) + B_n \sin(n \theta), \quad N \in \nn.
\end{equation}
Because $g \in L^2(\partial B)$ we have $g= \lim_{N \to \infty} S_N$ in $L^2(\partial B)$. Furthermore, the following holds
\begin{equation} \label{prop1} 
\begin{split}
                                \lim_{N \to \infty} S_N(\theta) & = g(\theta) \quad \text{for}~~\theta \in [0,2 \pi] \setminus \cZ,\\
                                \lim_{N \to \infty} S_N(z_i)&= \tfrac12(g_i^- + g_i^+), \quad ~1 \leq i \leq m. 
\end{split}
\end{equation}
On $B$ we use polar coordinates $(\theta,r)$. Below there is some abuse of notation: for $V \subset B$ we use $(\theta,r) \in V$ to denote $r (\cos \theta, \sin \theta) \in V$.  In polar coordinates we have $\Delta f= \frac{\partial^2 f}{\partial r^2}+ \frac{1}{r} \frac{\partial f}{\partial r}+ \frac{1}{r^2} \frac{\partial^2 f}{\partial \theta^2}$, and for $n \in \nn$ we have $\Delta \cos(n \theta) r^n=0$, $\Delta \sin(n \theta) r^n=0$. Using this we obtain that
\begin{equation} \label{solution}
  u(\theta, r):=\tfrac12  A_0 +\sum_{n=1}^\infty r^n \big(A_n \cos(n\theta)+ B_n \sin(n \theta)\big), \quad (\theta,r) \in B,
\end{equation}
solves $\Delta u=0$ in $B$. It is convenient to introduce another representation of $u$ using the Poisson kernel
\begin{equation} \label{Poissonkernel}
  K(\psi,r):= \frac{1-r^2}{1- 2 r \cos \psi +r^2} = 1 + 2 \sum_{n=1}^\infty \cos (n \psi) r^n,  \quad \psi \in [0,2\pi], ~0 \leq r <1. 
\end{equation}
The following holds:
\begin{equation} \label{ResK}
 u(\theta,r)= \frac{1}{2 \pi} \int_0^{2 \pi} K(\theta-\theta',r) g(\theta') \, d\theta'= \frac{1}{2 \pi} \int_0^{2 \pi} K(\theta',r) g(\theta-\theta') \, d\theta'.
\end{equation}
Using this  representation one can show, cf. \cite[Theorem 2.20]{HackbuschElliptic}, $\lim_{r \uparrow 1} u(\theta, r)=g(\theta)$ for all $\theta \in [0,2\pi] \setminus \cZ$. Hence, this $u$ solves the Laplace problem \eqref{Laplace}.

The function $u$ is \emph{not} continuous at  discontinuity points $(z_i,1)$, meaning that $\lim_{(\theta,r) \to (z_i,1)} u(\theta,r)$ does not exist. We give an example that is relevant for the analysis in Section~\ref{sectDDSchwarz} and illustrates the discontinuous behaviour for the special case of a $g$ that is piecewise constant.
\begin{example} \label{exGander}\rm This example is taken from \cite[Theorem 5]{Gander2018}. We take a piecewise constant $g$, with $g=1$ on $[-\theta^\ast,\theta^\ast]$, $0<\theta^\ast<\pi$, $g=0$ on $(\theta^\ast, 2 \pi - \theta^\ast)$. Hence we have two discontinuity locations $z_1=\theta^\ast$, $z_2=2\pi- \theta^\ast$. The solution $u$ of the Laplace equation \eqref{Laplace} is constant  on arcs  $A_{\tilde \theta^\ast}$ of circles parametrized by angles $\tilde \theta^\ast$ between the positive (starting from $M$) $x$-axis and the line that connects $M$  to the discontinuity point $z_1$, cf.  Fig.~\ref{NGFig1}. 
\begin{figure}[ht!]
	\centering \includegraphics[width=0.45\textwidth]{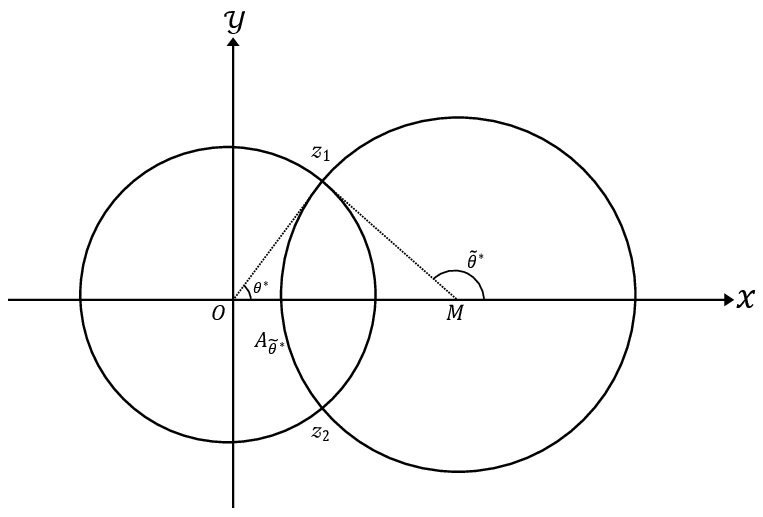}
	\caption{Arcs $A_{\tilde \theta^\ast}$ on which the solution has constant value $u(\theta,r)= \frac{\tilde  \theta^\ast - \theta^\ast}{\pi}$. \label{NGFig1}}
\end{figure}

The \emph{constant} value on the arc $A_{\tilde \theta^\ast}$ is given by $u(\theta,r)= \frac{\tilde  \theta^\ast - \theta^\ast}{\pi}$, $0 < \theta^\ast \leq \tilde \theta^\ast \ < \pi$.
 \end{example}
\ \\

A related result, which makes the discontinuity of $u(\theta,r)$ for $(\theta,r) \to (z_i,1)$  explicit in a more general case, can be derived as follows. Assume, without loss of generality, that $z_1=0$, and $\lim_{\theta \uparrow 2 \pi} g(\theta)=g_0^-$, $\lim_{\theta \downarrow 0} g(\theta)=g_0^+$. Locally, close to $(\theta,r)=(0,1)$ we consider a smooth curve in $B$ that ends at $(0,1)$ in the following sense. For given sufficiently small $\epsilon >0$, assume a $C^1$ function $\theta: (1-\epsilon,1] \to (-\tfrac12 \pi,\tfrac12 \pi) $ with a fixed sign and $\theta(1)=0$. We consider the curve (in polar coordinates) $(\theta(r),r)$, $r \in (1-\epsilon,1]$. 
\begin{lemma} \label{lemdisc}
 Consider a curve $(\theta(r),r)$ as defined above. For the solution $u$ of \eqref{Laplace} on this curve we have:
 \begin{equation} \label{reslemdisc}
  \lim_{r \uparrow 1} u\big(\theta(r),r\big)= \tfrac12(g_0^+ +g_0^-) + (g_0^+-g_0^-){\rm sign}(\theta)  \frac{1}{\pi} \arccos \big( (1+\theta'(1)^2)^{-\frac12}\big).
  \end{equation}
\end{lemma}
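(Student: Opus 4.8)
\noindent\emph{Proof strategy.}
The plan is to combine the Poisson integral representation \eqref{ResK} with the elementary primitive of the Poisson kernel, and to isolate the jump of $g$ at $z_1=0$ by linearity. First I would write $g=g_*+\tilde g$, where $g_*$ carries exactly the jump at $0$, namely $g_*(\theta):=g_0^-+(g_0^+-g_0^-)\,h(\theta)$ with $h$ the indicator of the arc $\theta\in(0,\pi)$, and $\tilde g:=g-g_*$. Then $\tilde g$ is continuous at $0$ with $\lim_{\theta\to0}\tilde g(\theta)=0$, while $g_*$ adds only a single extra jump, at $\pi$, far from the point of interest. The curve $(\theta(r),r)$ approaches $(1,0)$ \emph{non-tangentially}: its tangent at $r=1$ is $(1,\theta'(1))$, which makes angle $\arctan|\theta'(1)|<\tfrac\pi2$ with the inward normal $(-1,0)$. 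Hence, by the standard boundary behaviour of the Poisson integral at a point of continuity of the data (cf.~\cite[Theorem~2.20]{HackbuschElliptic}), the harmonic extension $\tilde u$ of $\tilde g$ satisfies $\lim_{r\uparrow1}\tilde u(\theta(r),r)=\tilde g(0)=0$. By linearity it then suffices to treat $u_*:=g_0^-+(g_0^+-g_0^-)\,v$, with $v$ the harmonic extension of $h$, and \eqref{reslemdisc} becomes equivalent to
\[
 \lim_{r\uparrow1}v(\theta(r),r)=\tfrac12+{\rm sign}(\theta)\,\tfrac1\pi\arccos\big((1+\theta'(1)^2)^{-\frac12}\big).
\]

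To evaluate this limit I would use \eqref{ResK} to write $v(\theta,r)=\frac1{2\pi}\int_{\theta-\pi}^{\theta}K(\psi,r)\,d\psi$, together with the primitive $\int K(\psi,r)\,d\psi=2\arctan\!\big(\tfrac{1+r}{1-r}\tan\tfrac\psi2\big)$, which is valid on $(-\pi,\pi)$ and whose derivative is readily checked to equal $K(\psi,r)$. Using that $K$ is even in $\psi$ and $\tfrac1{2\pi}\int_{-\pi}^{\pi}K=1$, one gets for $\theta\in(-\pi,\pi)$
\[
 v(\theta,r)-\tfrac12=\frac1{2\pi}\int_0^{\theta}K(\psi,r)\,d\psi-\frac1{2\pi}\int_{-\pi}^{\theta-\pi}K(\psi,r)\,d\psi .
\]
Now set $\theta=\theta(r)$ and let $r\uparrow1$. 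The second integral runs over an interval of length $|\theta(r)|\to0$ lying near $\psi=-\pi$, where $K(\psi,r)$ stays bounded (in fact tends to $0$), so that term is $o(1)$. For the first integral the primitive gives $\frac1{2\pi}\int_0^{\theta(r)}K(\psi,r)\,d\psi=\frac1\pi\arctan\!\big(\tfrac{1+r}{1-r}\tan\tfrac{\theta(r)}2\big)$, and since $\theta(r)=\theta'(1)(r-1)+o(r-1)$ we have $\tfrac{1+r}{1-r}\tan\tfrac{\theta(r)}2=\tfrac{\theta(r)}{1-r}\,(1+o(1))\to-\theta'(1)$; hence the first term tends to $\tfrac1\pi\arctan(-\theta'(1))$. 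Finally, $\arccos\big((1+a^2)^{-1/2}\big)=\arctan|a|$, and ${\rm sign}(\theta)=-{\rm sign}(\theta'(1))$ when $\theta'(1)\neq0$, so $-\arctan(\theta'(1))={\rm sign}(\theta)\arccos\big((1+\theta'(1)^2)^{-1/2}\big)$; this identity also holds trivially when $\theta'(1)=0$. This gives the displayed limit for $v$, hence \eqref{reslemdisc}.

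The step I expect to be the main obstacle is that the curve may be (almost) tangential to $\partial B$ at the discontinuity point, so no soft limit theorem applies: one must track the precise competition between the rate $\theta(r)\sim\theta'(1)(r-1)$ at which the curve reaches $z_1$ and the rate $\sim(1-r)$ at which the Poisson kernel concentrates at $\psi=0$. This is exactly what the closed form $\frac1\pi\arctan\!\big(\tfrac{1+r}{1-r}\tan\tfrac{\theta(r)}2\big)$ makes quantitative, the product $\tfrac{1+r}{1-r}\tan\tfrac{\theta(r)}2$ converging to the finite limit $-\theta'(1)$. The remaining points are routine: verifying that the $o(1)$ terms really vanish (uniform smallness of $K$ near $\psi=-\pi$, and of the argument of $\arctan$), and the bookkeeping of ${\rm sign}(\theta)$ versus ${\rm sign}(\theta'(1))$. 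As a consistency check, the level sets of $v$ are the circular arcs through $(\pm1,0)$, in agreement with the constant-on-arcs structure of Example~\ref{exGander}.
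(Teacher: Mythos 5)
Your argument is correct, and it reaches \eqref{reslemdisc} by a genuinely different route than the paper. Both proofs share the same skeleton — peel off a model function carrying the jump at $z_1=0$, use the standard boundary convergence of the Poisson integral for the continuous remainder, and then do an exact computation for the model — but the model and the computation differ. The paper subtracts the sawtooth $\ell(\theta)=\tfrac{\delta}{2\pi}\theta$, expands the Poisson kernel in its Fourier series, integrates term by term to arrive at the conjugate series $\sum_{n\ge1}\tfrac{r^n}{n}\sin(n\theta)$, and sums it via the complex logarithm to extract $\tau={\rm sign}(\theta)\arccos\big(\tfrac{1-r\cos\theta}{\tilde r}\big)$. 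You instead subtract the Heaviside step $h$ (harmonic measure of a half-circle) and exploit the closed-form primitive $\int K(\psi,r)\,d\psi=2\arctan\big(\tfrac{1+r}{1-r}\tan\tfrac{\psi}{2}\big)$, which I checked is correct, so the whole computation is a single elementary antiderivative plus the easy bound $K\le 1-r^2$ near $\psi=-\pi$ for the far piece. The delicate step is the same in both proofs and you handle it correctly: the competition between $\theta(r)\sim\theta'(1)(r-1)$ and the $(1-r)$-scale concentration of the kernel, which for you is the limit $\tfrac{1+r}{1-r}\tan\tfrac{\theta(r)}{2}\to-\theta'(1)$ and for the paper is $\lim_{r\uparrow1}\tfrac{\sin(\theta(r)/2)}{1-r}=-\tfrac12\theta'(1)$. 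Your sign bookkeeping (${\rm sign}(\theta)=-{\rm sign}(\theta'(1))$ when $\theta'(1)\ne0$, and $\arccos\big((1+a^2)^{-1/2}\big)=\arctan|a|$) is right. Two cosmetic remarks: the non-tangentiality of the curve is not actually needed for the remainder term — for bounded piecewise-continuous data the unrestricted limit of the Poisson integral at a point of continuity already gives $\tilde g(0)=0$, which is the same fact the paper invokes for $g_c$; and your decomposition introduces an extra jump of $\tilde g$ at $\theta=\pi$, which is harmless for the same reason but worth a sentence in a polished write-up. What your approach buys is the avoidance of the Fourier-series/complex-logarithm machinery in favour of a real-variable antiderivative, at the cost of having to estimate the small contribution near $\psi=-\pi$ separately.
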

\begin{proof}
 A proof is given in the Appendix. 
\end{proof}
\\[2ex]
The result \eqref{reslemdisc} shows that, as expected, the limit value on the curve at the point of discontinuity $(z_1,1)$ is a convex combination of the two limit values $g_0^+$ and $g_0^-$ of the boundary data $g$ on $\partial B$. On a curve with $\theta'(1)=0$, i.e., perpendicular to $\partial B$ at $(z_1,1)$, we have $\lim_{r \uparrow 1} u\big(\theta(r),r\big)= \tfrac12(g_0^+ +g_0^-)$. For the case $|\theta'(1)| \gg 1$, i.e., a curve almost tangential to $\partial B$ at $(z_1,1)$, we have $\lim_{r \uparrow 1} u\big(\theta(r),r\big) \approx  \tfrac12(g_0^+ +g_0^-) + \tfrac12(g_0^+-g_0^-){\rm sign}(\theta)$.

We recall the fundamental maximum principle. For the solution $u= \Delta^{-1}g $ the following holds:
\begin{equation} \label{maximum}
  \min_{\partial B} g \leq u(\theta,r) \leq \max_{\partial B} g \quad \text{for all}~(\theta,r)\in B. 
\end{equation}
One may check that this property is equivalent to the following two properties of the Poisson kernel
\begin{align}
      K(\psi,r) & \geq 0 \quad \text{for all}~(\psi,r) \in B, \label{Max1} \\
      \frac{1}{2 \pi} \int_0^{2 \pi}   K(\psi,r) \, d\psi&  =1 \quad \text{for all}~ 0 \leq r <1. \label{Max2}
\end{align}
\section{Convergence of a Schwarz domain decomposition iteration} \label{sectDDSchwarz}
In this section we recall a classical analysis of the Schwarz method as presented in the seminal paper \cite{Lions1989}. 
We consider the Laplace equation on a domain $\Omega$ that is formed by the union of two overlapping discs. To parameterize the possible geometries we use the angles $\theta^\ast_1$, $\theta^\ast_2$, with $0< \theta^\ast_1 \leq \theta^\ast_2 < \pi$,  as shown in Fig.~\ref{FigGeom}. These two angles uniquely specify the geometry, except for an arbitrary scaling. Without loss of generality we can fix the scaling by taking for the left disc (with angle denoted by $\theta^\ast_1$ in Fig.~\ref{FigGeom}) 
the unit disc with center $(0,0)$, denoted by $B_1=B((0,0);1)$. The intersection points are $(\cos\theta^\ast_1, \sin \theta^\ast_1)$ and $(\cos\theta^\ast_1, -\sin \theta^\ast_1)$.  The other disc has center $(m,0)$ (in Euclidean coordinates), and radius $R$, i.e., $B_2=B((m,0);R)$. 
\begin{figure}[ht!]
	\centering \includegraphics[width=0.45\textwidth]{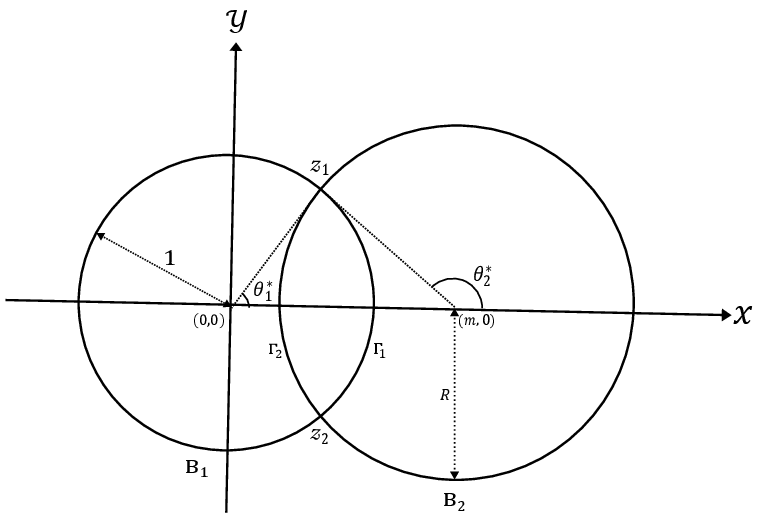}
	\caption{Geometry of two overlapping discs $B_1=B((0,0);1)$ and $B_2=B((m,0);R)$. \label{FigGeom}}
\end{figure}

Given $\theta^\ast_1$ and $\theta^\ast_2$, elementary computations yield that the values of $R$ and $m>0$ are uniquely determined by the relations
\begin{equation} \label{defmA}
  R =\frac{\sin \theta^\ast_1}{\sin \theta^\ast_2}, \quad m=\cos \theta_1^\ast- \frac{\sin \theta_1^\ast}{\tan \theta_2^\ast} \quad \text{if}~\theta_2^\ast \neq \frac{\pi}{2}, \quad m=\cos \theta_1^\ast  \quad \text{if}~\theta_2^\ast = \frac{\pi}{2}.
 \end{equation}
\begin{remark} \label{recontheta}
  \rm If the disc $B_2=B((m,0);R)$ is given one can determine corresponding $\theta^\ast_1, \theta^\ast_2$ as follows. To have intersection points $z_1$ and $z_2$ as in Fig.~\ref{FigGeom} we assume that the intersection points of $B_2$ with the $x$-axis are in $(-1,1)$ and $(1,\infty)$, i.e., $B_2$ is chosen such that $-1 < m-R <1$ and $1<m+R$ (this implies $m>0$). Using the cosine rule we can determine  unique $\theta^\ast_1$ and $\theta^\ast_2$ from
  \begin{equation} \label{formtheta} \begin{split}
                R^2 & =1+m^2 -2 m \cos \theta^\ast_1 \\
                1 & = R^2 + m^2 - 2 R m \cos(\pi - \theta^\ast_2).
                   \end{split}
\end{equation}
\end{remark}

We also use the notation $\Omega:=B_1 \cup B_2$, $\Gamma_1:=\partial B_1 \cap B_2$, $\Gamma_2:=\partial B_2 \cap B_1$, cf. Fig.~\ref{FigGeom}. Hence, $\partial \Omega=(\partial B_1 \setminus \Gamma_1) \cup (\partial B_2 \setminus \Gamma_2)$. We assume boundary data $g$ on $\partial \Omega$ that is continuous and the restriction of $g$ to  a circle section is denoted by $g_i:=g_{|\partial B_i \setminus \Gamma_i}$. 

The classical Schwarz method is an iterative procedure for approximately solving
\begin{equation} \label{Laplace2} \begin{split}
   \Delta u &=0 \quad \text{in}~~\Omega \\
   u&=g \quad \text{on}~~\partial \Omega.
\end{split}
\end{equation}

To simplify the presentation we consider the additive variant of the Schwarz-DD method, cf. Remark~\ref{RemAdditive}. Given $u_i^0 \in C(\overline{B}_i)$, with ${u_i^0}_{|\partial B_i \setminus \Gamma_i}=g_i$, $i=1,2$, we determine for $n\geq 1$:\\
\begin{minipage}{0.5\textwidth}
 \begin{align*}
  \Delta u_1^n &=0\quad \text{in}~B_1\\
   u_1^n&=g_1 \quad \text{on}~\partial B_1 \setminus \Gamma_1 \\
   u_1^n &= u_2^{n-1} \quad \text{on}~\Gamma_1.
 \end{align*}
\end{minipage}
\begin{minipage}{0.49\textwidth}
 \begin{equation} \label{DDschwarzit} \begin{split}
  \Delta u_2^n &=0\quad \text{in}~B_2\\
   u_2^n&=g_2 \quad \text{on}~\partial B_2 \setminus \Gamma_2 \\
   u_2^n &= u_1^{n-1} \quad \text{on}~\Gamma_2.
 \end{split} \end{equation}
\end{minipage}
\\[1ex]
Convergence of this iteration is completely determined by ${u_1^n}_{|\Gamma_2}$ and ${u_2^n}_{|\Gamma_1}$. To put the iteration in a more convenient form we introduce further notation. The solution of the Laplace equation on $B_i$ with piecewise continuous boundary data $(v,w) \in C(\overline{\partial B_i \setminus \Gamma_i}) \times C(\overline{\Gamma}_i)$ is denoted by $\Delta_i^{-1}(v,w)$, $i=1,2$. The function $\Delta_1^{-1}(v,w)$ can be restricted to $\Gamma_2$ and this function has a continuous extension to $\overline{\Gamma}_2$, cf. Lemma~\ref{lemdisc}. This restriction operator $B_1 \to \overline{\Gamma}_2$ is denoted by $R_{\overline{\Gamma}_2}$. Similarly we define $R_{\overline{\Gamma}_1}$. The sequences $(u_i^n)_{n \geq 1}$, $i=1,2$, satisfy
\[
  {u_1^n}_{|\overline{\Gamma}_2}= R_{\overline{\Gamma}_2}\Delta_1^{-1}(g_1,{u_2^{n-1}}_{|\overline{\Gamma}_1}), \qquad 
{u_2^n}_{|\overline{\Gamma}_1}= R_{\overline{\Gamma}_1}\Delta_2^{-1}(g_2,{u_1^{n-1}}_{|\overline{\Gamma}_2}).
  \]
For a more compact notation we also introduce the Dirichlet to Dirichlet map $L_1: \, C(\overline{\Gamma}_1) \to C(\overline{\Gamma}_2)$, 
\begin{equation} \label{defL1} L_1v:=R_{\overline{\Gamma}_2}\Delta_1^{-1}(0,v),
\end{equation}
 and similarly $L_2:\, C(\overline{\Gamma}_2) \to C(\overline{\Gamma}_1)$. We thus obtain
\begin{equation} \label{iteration}
 \begin{pmatrix}
  {u_1^n}_{|\overline{\Gamma}_2}\\
  {u_2^n}_{|\overline{\Gamma}_1}
 \end{pmatrix} = 
 \begin{pmatrix}
   0 & L_1 \\ L_2 & 0
  \end{pmatrix}
  \begin{pmatrix}
  {u_1^{n-1}}_{|\overline{\Gamma}_2}\\
  {u_2^{n-1}}_{|\overline{\Gamma}_1}
 \end{pmatrix}
   +  \begin{pmatrix}
       R_{\overline{\Gamma}_2}\Delta_1^{-1}(g_1,0)\\
       R_{\overline{\Gamma}_1}\Delta_2^{-1}(g_2,0)
      \end{pmatrix}.
\end{equation}
The operator $L=\begin{pmatrix}
   0 & L_1 \\ L_2 & 0
  \end{pmatrix}$ determines the convergence properties of the method. Below we use the maximum norm on $C(\overline{\Gamma}_2) \times C(\overline{\Gamma}_1)$ and  derive $\|L\|_\infty < 1$. Thus we have a contraction and a unique fixed point $\big({u_1^\infty}_{|\overline{\Gamma}_2},
  {u_2^\infty}_{|\overline{\Gamma}_1}\big)$ in the iteration \eqref{iteration}. One easily checks that $u_1:=\Delta_1^{-1}(g_1,{u_2^\infty}_{|\overline{\Gamma}_1})$ is harmonic on $B_1$ and $u_2:=\Delta_2^{-1}(g_2,{u_1^\infty}_{|\overline{\Gamma}_2})$ is harmonic on $B_2$ and $u_1=u_2$ on $B_1\cap B_2$. Hence this pair solves the Laplace problem~\eqref{Laplace2}.
For the contraction number of \eqref{iteration} in the maximum norm we have the following result, which directly follows from  results presented in \cite{Gander2018}.
\begin{theorem} \label{thmconstraction}
The following holds:
\begin{equation} \label{estL1} \begin{split}
 \|L\|_\infty & = \|L_1\|_\infty  \leq C_1(\theta^\ast_1,\theta^\ast_2) <1  \\
 \text{with}~~& C_1(\theta^\ast_1,\theta^\ast_2):= \frac{\theta^\ast_2-\theta^\ast_1}{\pi}.
\end{split} \end{equation}
\end{theorem}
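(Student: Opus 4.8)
The plan is to use the maximum principle to reduce the computation of $\|L\|_\infty$ to evaluating one fixed harmonic function on the arc $\Gamma_2$ (and, symmetrically, on $\Gamma_1$), and to then read off that value from Example~\ref{exGander}. As a first reduction, observe that $L=\begin{pmatrix}0&L_1\\ L_2&0\end{pmatrix}$ acts on $C(\overline{\Gamma}_2)\times C(\overline{\Gamma}_1)$, equipped with the norm $\|(x,y)\|_\infty=\max\{\|x\|_\infty,\|y\|_\infty\}$, by $(x,y)\mapsto(L_1y,L_2x)$; hence $\|L\|_\infty=\max\{\|L_1\|_\infty,\|L_2\|_\infty\}$, and it suffices to treat $\|L_1\|_\infty$ and $\|L_2\|_\infty$ separately.

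For $\|L_1\|_\infty$, set $w_1:=\Delta_1^{-1}(0,\mathbf 1)$ on $B_1$, the harmonic extension of the boundary datum equal to $1$ on $\Gamma_1$ and $0$ on $\partial B_1\setminus\Gamma_1$ (discontinuous precisely at $z_1,z_2$), so that $L_1\mathbf 1=R_{\overline{\Gamma}_2}w_1$. For any $v\in C(\overline{\Gamma}_1)$ with $\|v\|_\infty\le 1$, the two harmonic functions $w_1\pm\Delta_1^{-1}(0,v)$ have boundary data that is $0$ on $\partial B_1\setminus\Gamma_1$ and $1\pm v\ge 0$ on $\Gamma_1$, hence are nonnegative on $B_1$ by the maximum principle \eqref{maximum}; consequently $|\Delta_1^{-1}(0,v)|\le w_1$ on $B_1$, in particular on $\Gamma_2$. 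Taking the continuous extension onto $\overline{\Gamma}_2$ (which exists by Lemma~\ref{lemdisc}) yields $\|L_1v\|_\infty\le\max_{\overline{\Gamma}_2}R_{\overline{\Gamma}_2}w_1$, with equality for $v\equiv\mathbf 1$; hence $\|L_1\|_\infty=\max_{\overline{\Gamma}_2}R_{\overline{\Gamma}_2}w_1$.

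It remains to evaluate this maximum. The function $w_1$ is precisely the harmonic function of Example~\ref{exGander} with $\theta^\ast=\theta_1^\ast$, so it is constant on each circular arc $A_{\tilde\theta^\ast}$ through $z_1$ and $z_2$, with value $(\tilde\theta^\ast-\theta_1^\ast)/\pi$. Since $\Gamma_2=\partial B_2\cap B_1$ is an arc of the circle $\partial B_2$, which passes through $z_1$ and $z_2$, $w_1$ is constant on $\overline{\Gamma}_2$; the corresponding arc parameter is the angle, at the centre $(m,0)$ of $B_2$, between the positive $x$-direction and the segment to $z_1$, and the cosine rule in the triangle with vertices $(0,0),(m,0),z_1$ --- i.e.\ relations \eqref{defmA}--\eqref{formtheta} --- shows this angle equals $\theta_2^\ast$. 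Therefore $R_{\overline{\Gamma}_2}w_1\equiv(\theta_2^\ast-\theta_1^\ast)/\pi=C_1(\theta_1^\ast,\theta_2^\ast)$, so that $\|L_1\|_\infty=C_1(\theta_1^\ast,\theta_2^\ast)$. The same reasoning on $B_2$ gives $\|L_2\|_\infty=\max_{\overline{\Gamma}_1}R_{\overline{\Gamma}_1}w_2$ with $w_2:=\Delta_2^{-1}(0,\mathbf 1)$: now the datum $1$ sits on $\Gamma_2$, whose half-opening at the centre of $B_2$ equals $\pi-\theta_2^\ast$, and the relevant level arc is $\Gamma_1\subset\partial B_1$ with parameter $\pi-\theta_1^\ast$, so $w_2\equiv\big((\pi-\theta_1^\ast)-(\pi-\theta_2^\ast)\big)/\pi=C_1(\theta_1^\ast,\theta_2^\ast)$ on $\overline{\Gamma}_1$ as well. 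Hence $\|L\|_\infty=\|L_1\|_\infty=C_1(\theta_1^\ast,\theta_2^\ast)$, and $C_1<1$ since $0<\theta_1^\ast\le\theta_2^\ast<\pi$ forces $\theta_2^\ast-\theta_1^\ast<\pi$. The step I expect to be the main obstacle is the pair of ``arc parameter'' identifications --- matching $\Gamma_2$, respectively $\Gamma_1$, with the level arc $A_{\theta_2^\ast}$, respectively $A_{\pi-\theta_1^\ast}$, of Example~\ref{exGander}; this is elementary but needs careful bookkeeping with the geometry of Fig.~\ref{FigGeom} (and, in the $B_2$ case, a rotation by $\pi$ bringing $\Gamma_2$ into the position used in that example), while everything else rests on the maximum principle \eqref{maximum} and the product-norm identity above.
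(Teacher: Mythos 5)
Your proof is correct and follows essentially the same route as the paper: reduce to $\|L_1\|_\infty$, use the maximum principle to dominate $|\Delta_1^{-1}(0,v)|$ by the harmonic extension of $\chi_{\Gamma_1}$ (you phrase this via the comparison functions $w_1\pm\Delta_1^{-1}(0,v)\ge 0$, the paper via positivity of the Poisson kernel in \eqref{keystep} --- these are equivalent), and then read off the constant value $(\theta_2^\ast-\theta_1^\ast)/\pi$ on the level arc $\Gamma_2$ from Example~\ref{exGander}, with the symmetric argument on $B_2$ using the angles $\pi-\theta_2^\ast$ and $\pi-\theta_1^\ast$. Your additional observation that the bound is attained at $v\equiv\mathbf 1$, so that in fact $\|L_1\|_\infty=C_1(\theta_1^\ast,\theta_2^\ast)$, is a correct (mild) sharpening of the stated inequality.
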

\begin{proof}
 Note $\|L\|_\infty= \max\{\, \|L_1\|_\infty,\|L_2\|_\infty\,\}$. We consider 
 \[ \|L_1\|_\infty=\max_{v \in C(\overline{\Gamma}_1)} \frac{\|L_1v\|_{L^\infty(\overline{\Gamma}_2)}}{\|v\|_{L^\infty(\overline{\Gamma}_1)}}.
 \]
Recall $L_1v=R_{\overline{\Gamma}_2}\Delta_1^{-1}(0,v)$. For $(\theta,r) \in B_1$ we have, with $\theta^\ast_1$ as in Fig.~\ref{FigGeom} and $\chi_{\Gamma_1}$ the characteristic function on $\Gamma_1$:
\begin{equation} \label{keystep} \begin{split}
  \big|\big(\Delta_1^{-1}(0,v)\big)(\theta,r)\big| & = \left| \frac{1}{2 \pi} \int_{- \theta^\ast_1}^{\theta^\ast_1} K(\theta-\theta',r) v(\theta')\, d\theta'\right| \\ & \leq  \frac{1}{2 \pi} \int_{- \theta^\ast_1}^{\theta^\ast_1} K(\theta-\theta',r) \, d\theta'\,  \|v\|_{L^\infty(\overline{\Gamma}_1)} \\
   &= ( \Delta_1^{-1}\chi_{\Gamma_1} )(\theta,r)\|v\|_{L^\infty(\overline{\Gamma}_1)}. 
\end{split} \end{equation}
Note that  $w = \Delta_1^{-1}\chi_{\Gamma_1} $ is the solution of the Laplace problem on $B_1$ with boundary data the piecewise constant function that has value $1$ on $\Gamma_1$ and $0$ on $\partial B_1 \setminus \Gamma_1$. This solution $w$ has constant values  on arcs of circles, cf. Example~\ref{exGander}.  Due to $R_{\overline{\Gamma}_2}$ we restrict to $(\theta,r) \in \Gamma_2$ which is one of these arcs. The value of $w$ on this arc is given by $C_1(\theta^\ast_1,\theta^\ast_2)$ as defined in \eqref{estL1}. This proves $\|L_1\|_\infty  \leq C_1(\theta^\ast_1,\theta^\ast_2)$. For $L_2$ we apply the same arguments, but using polar coordinates on the disc $B_2$. This leads to the solution of the Laplace problem on $B_2$ with boundary data the piecewise constant function that has value $1$ on $\Gamma_2$ and $0$ on $\partial B_2 \setminus \Gamma_2$. The constant value of the solution on the arc $\Gamma_1$ is determined by the angles $\pi - \theta^\ast_2$ (in $B_2$) and $\pi - \theta^\ast_1$ (in $B_1$) and given by $\frac{(\pi - \theta^\ast_1)-(\pi - \theta^\ast_2)}{\pi}=C_1(\theta^\ast_1,\theta^\ast_2)$. Hence, in this case, due to symmetry properties, we have $\|L_2\|_\infty=\|L_1\|_\infty$, which yields the equality result in \eqref{estL1}.
\end{proof}
\ \\[1ex]
We briefly comment on this elementary proof. There are two key ingredients, namely the kernel sign property \eqref{Max1}  that is used in the inequality in \eqref{keystep}, and the fact that the solution of a Laplace problem with boundary data 1 on $\Gamma_1$ and $0$ on $\partial B_1 \setminus \Gamma_1$ has a solution with  values on  $\Gamma_2$ that are bounded away from 1. These arguments apply in a much more general setting of elliptic partial differental equations on overlapping domains, as first elaborated in \cite{Lions1989}. 
\begin{remark} \label{RemAdditive}
   \rm If one considers a multiplicative Schwarz DD, then the convergence is determined by $L_1L_2$ (or $L_2L_1$). Note that $L^2=\begin{pmatrix}
   L_1L_2 & 0 \\ 0 & L_2L_1 \end{pmatrix}$ holds. From this we obtain  
   $\max \{\|L_1L_2\|_\infty,\|L_2 L_1\|_\infty\} = \|L^2\|_\infty \leq \|L\|_\infty^2 \leq C_1(\theta^\ast_1,\theta^\ast_2)^2$. Thus, as expected for this case with two subdomains, the multiplicative method has a  contraction number that is (not larger than) the square of the one of the additive method. 
  \end{remark}  
\section{Schwarz-Fourier domain decomposition method} \label{SectSchwarzF}
In this section we introduce an inexact version of the Schwarz method \eqref{DDschwarzit}. We approximate the boundary data on $\partial B_1$ in the real Fourier basis, using a finite sum as in \eqref{finitesum}. The corresponding space is given by
\begin{equation} \label{DefUN1}
  U_N^1:= \{\, \alpha_0 +\sum_{n=1}^N \alpha_n \cos (n \theta) + \beta_n \sin( n \theta)\,|\, \alpha_n, \beta_n \in \R\,\} \subset L^2(\partial B_1).
\end{equation}
We also use polar coordinates on $B_2$, i.e. a parametrization of the form $(\theta,r) \to (m,0)+r(\cos \theta, \sin \theta)$, $\theta \in [0,2\pi]$, $r \in [0,R]$. The analogue of $U_N^1$ on $\partial B_2$ is denoted by $U_N^2$. On the ball $B_2$ one may want to use a different $N$ value (depending on $R$). To simplify the presentation we use the same $N$ values on $B_1$ and $B_2$. The $L^2$-orthogonal projection $L^2(\partial \Omega_i) \to U_N^i$ is denoted by $P_N^i$, $i=1,2$. The \emph{inexact} version of  \eqref{DDschwarzit} that we consider is as follows. Given $u_i^0 \in C(\overline{B}_i)$, with ${u_i^0}_{|\partial B_i \setminus \Gamma_i}=g_i$, $i=1,2$, we determine for $n\geq 1$:\\
\begin{minipage}{0.5\textwidth}
 \begin{align*}
  \Delta u_1^n &=0\quad \text{in}~B_1\\
   u_1^n&=P_N^1(g_1,u_2^{n-1}) \quad \text{on}~\partial B_1 
 \end{align*}
\end{minipage}
\begin{minipage}{0.49\textwidth}
 \begin{equation} \label{DDschwarzinexact} \begin{split}
  \Delta u_2^n &=0\quad \text{in}~B_2\\
   u_2^n&=P_N^2(g_2,u_1^{n-1}) \quad \text{on}~\partial B_2 .
 \end{split} \end{equation}
\end{minipage}
\ \\[1ex]
We call this method the Schwarz-Fourier iteration. 
Once the Fourier projection $P_N^1(g_1,u_2^{n-1})(\theta)=\alpha_0 +\sum_{n=1}^N \alpha_n \cos (n \theta) + \beta_n \sin( n \theta)$ has been determined, the harmonic extension $u_1^n$ is directly available via 
\[
  u_1^n(\theta,r)=\alpha_0 +\sum_{n=1}^N r^n\big(\alpha_n \cos (n \theta) + \beta_n \sin( n \theta)\big),
\]
cf. \eqref{solution}. Clearly the same arguments apply for $P_N^2(g_2,u_1^{n-1})$ and $u_2^n$. \emph{In the rest of this paper we study convergence properties of the Schwarz-Fourier iteration}. Note that due to the use of the finite dimensional spaces $U_N^i$ this iteration \emph{discretizes} the given Laplace problem \eqref{Laplace2}. Hence, besides the (rate of) convergence of this iteration, there is another highly relevant issue, namely the accuracy of the discrete solution. Assume 
$\lim_{n \to \infty} u_1^n =u_1^\infty$, then for the (total) error in $u_1^n$ we have $u_{|B_1}- u_1^n=\big(u_{|B_1}- u_1^\infty\big)+\big(  u_1^\infty- u_1^n\big)=:e_{\rm discr}+e_{\rm iter}$. In this paper we only consider the iteration error $e_{\rm iter}$.
\begin{remark}\label{Remdiscr} \rm
 Concerning the discretization error in the Schwarz-Fourier method we note the following. Even for smooth boundary data $g$ the solution $u$ of \eqref{Laplace2} restricted to one of the subdomain boundaries $u_{|\partial B_i}$ is in general only continuous at the intersection points. Therefore one can not expect very fast (exponential) convergence, although the spectral Fourier method is used for approximating $u_{|\partial B_i}$. Here we do not analyze this discretization aspect further, but note that in the applications with the three-dimensional analogon of the Schwarz-Fourier method (using spherical harmonics) one typically uses (very) low degree spherical harmonics, cf. \cite{Stammetal,Quan2018}. 
\end{remark}
\ \\[1ex]
For the convergence analysis we formulate \eqref{DDschwarzinexact} analogous to \eqref{defL1}-\eqref{iteration}, but now with the Dirichlet to Dirichlet mapping 
\begin{equation} \label{defL1N} L_{1,N}v:=R_{\overline{\Gamma}_2}\Delta_1^{-1}P_N^1(0,v),
\end{equation}
 and similarly $L_{2,N}$. Hence, the Schwarz-Fourier method is given by:
\begin{equation} \label{iterationinexact}
 \begin{pmatrix}
  {u_1^n}_{|\overline{\Gamma}_2}\\
  {u_2^n}_{|\overline{\Gamma}_1}
 \end{pmatrix} = 
 \begin{pmatrix}
   0 & L_{1,N} \\ L_{2,N} & 0
  \end{pmatrix}
  \begin{pmatrix}
  {u_1^{n-1}}_{|\overline{\Gamma}_2}\\
  {u_2^{n-1}}_{|\overline{\Gamma}_1}
 \end{pmatrix}
   +  \begin{pmatrix}
       R_{\overline{\Gamma}_2}\Delta_1^{-1}P_N^1(g_1,0)\\
       R_{\overline{\Gamma}_1}\Delta_2^{-1}P_N^2(g_2,0)
      \end{pmatrix}.
\end{equation}
The operator $L_{N}:=\begin{pmatrix} 0 & L_{1,N} \\ L_{2,N} & 0
  \end{pmatrix}$ determines the convergence properties of the method and for deriving contraction results in the maximum norm one has to study (only) $\|L_{1,N}\|_\infty$. Note that $L_{2,N}$ as essentially the same structure as $L_{1,N}$. In the next section we first derive results for the operator $\Delta_1^{-1} P_N^1: L^2(\partial B_1) \to C^\infty(\overline{B}_1)$, which for a given boundary data function, first takes the finite dimensional Fourier projection of this function and then the corresponding harmonic extension on $B_1$ of this projection.    

\section{A variant of the maximum principle} \label{sectNewmaximum}
In this section we derive a variant of  the maximum principle for the operator $\Delta_1^{-1} P_N^1$. To avoid technical details we restrict the domain of this operator to the subspace $C_\cZ \subset L^2(\partial B_1)$, consisting of piecewise continuous functions on $\partial B_1$, cf. Section~\ref{sectpreliminaries}. Note that, for $g \in C_\cZ$:
\[
 w=\Delta_1^{-1} P_N^1 g \quad \text{iff}~ w(\theta,r)= \frac{1}{2 \pi} \int_0^{2 \pi} K(\theta',r) P_N^1 g(\theta-\theta')\, d\theta', \quad (\theta,r) \in B_1.  
\]
We can shift the projection operator to the Poisson kernel and then obtain the following result, cf. also \eqref{Poissonkernel}:
\begin{lemma} \label{shiftPN}
 For $g \in C_\cZ$  we have
 \begin{equation} \label{resultshift} \begin{split}
   w=\Delta_1^{-1} P_N^1 g \quad & \text{iff}~~ w(\theta,r)= \frac{1}{2 \pi} \int_0^{2 \pi} K_N(\theta',r)  g(\theta-\theta')\, d\theta', \quad (\theta,r) \in B_1, \\
   & \text{with}~~K_N(\psi,r)= 1+ 2 \sum_{n=1}^N \cos (n \psi) r^n.
 \end{split} \end{equation}
\end{lemma}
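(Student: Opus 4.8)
The plan is to prove the equivalence by a direct computation that moves the finite-rank projection $P_N^1$ from the boundary data onto the Poisson kernel. The key observation is that both $P_N^1$ and the harmonic-extension/convolution operation are convolutions on the circle, and convolutions commute; the only subtlety is that the Poisson kernel $K(\cdot,r)$ is not in the finite-dimensional range of $P_N^1$, so one must check that replacing $K$ by its truncation $K_N$ is exactly what applying $P_N^1$ to $g$ does under the integral.

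First I would recall from \eqref{Poissonkernel} that $K(\theta',r)=1+2\sum_{n\ge 1}\cos(n\theta')r^n$, and from the definition of $P_N^1$ that $P_N^1 g$ is the partial Fourier sum $S_N$ of $g$ as in \eqref{finitesum} with coefficients $A_n,B_n$. Starting from the already-established formula $w(\theta,r)=\frac{1}{2\pi}\int_0^{2\pi}K(\theta',r)\,P_N^1 g(\theta-\theta')\,d\theta'$, I would substitute the explicit finite sum for $P_N^1 g(\theta-\theta')$, expand $\cos\!\big(n(\theta-\theta')\big)$ and $\sin\!\big(n(\theta-\theta')\big)$ via the addition formulas, and use the elementary orthogonality relations $\frac{1}{2\pi}\int_0^{2\pi}\cos(k\theta')\cos(n\theta')\,d\theta'$, etc., against the terms of $K(\theta',r)$. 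Because $K$ contains only cosine modes with weight $r^n$, integrating $K(\theta',r)$ against the mode-$n$ part of $P_N^1 g(\theta-\theta')$ simply reproduces that mode multiplied by $r^n$; summing over $n\le N$ gives exactly $\alpha_0+\sum_{n=1}^N r^n(\alpha_n\cos n\theta+\beta_n\sin n\theta)$, i.e. the harmonic extension of the truncated data, which is also what $\frac{1}{2\pi}\int_0^{2\pi}K_N(\theta',r)\,g(\theta-\theta')\,d\theta'$ yields since the high modes of $g$ are annihilated by the finitely many cosine modes of $K_N$. This establishes the ``only if'' direction; the ``if'' direction is immediate since $\Delta_1^{-1}P_N^1 g$ is uniquely determined by $g$, so the two integral expressions, being equal, characterize the same $w$.

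A cleaner way to organize the same argument, which I would actually present, is: for any $f\in C_\cZ$ write $\widehat{f}(n)$ for its complex Fourier coefficients; then convolution with $K(\cdot,r)$ multiplies $\widehat{f}(n)$ by $r^{|n|}$, convolution with $K_N(\cdot,r)$ multiplies $\widehat{f}(n)$ by $r^{|n|}\chi_{|n|\le N}$, and $P_N^1$ multiplies $\widehat{f}(n)$ by $\chi_{|n|\le N}$. Hence $K(\cdot,r)*(P_N^1 g)$ and $K_N(\cdot,r)*g$ have Fourier coefficients $r^{|n|}\chi_{|n|\le N}\widehat{g}(n)$ and $r^{|n|}\chi_{|n|\le N}\widehat{g}(n)$ respectively — identical — so the two functions of $\theta$ agree (both are trigonometric polynomials, hence equality of Fourier coefficients gives pointwise equality). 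This is just bookkeeping with Fourier multipliers and uses only \eqref{Poissonkernel} and the definition of $P_N^1$.

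I do not anticipate a genuine obstacle here; the statement is essentially a reorganization of a convolution. The one point that deserves a sentence of care is the regularity/convergence justification: since $g\in C_\cZ\subset L^2(\partial B_1)$ and $P_N^1 g$ is a trigonometric polynomial, all integrals are over a compact interval of a bounded integrand, so interchanging sum and integral is trivially valid, and the truncation $K_N$ being a finite sum means no convergence issues arise with $K_N$ either. Thus the ``hard part'' is merely to present the index juggling transparently; I would favor the Fourier-multiplier formulation above precisely because it makes the commutation of the two convolutions manifest in one line.
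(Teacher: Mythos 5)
Your proposal is correct and follows essentially the same route as the paper: the paper's proof also shifts the projection onto the kernel, using that $K_N(\cdot,r)=P_N^1K(\cdot,r)$ together with the self-adjointness of the orthogonal projection, which is exactly what your Fourier-multiplier bookkeeping ($r^{|n|}$, $\chi_{|n|\le N}$, and their product) makes explicit. The only justification needed beyond that is the uniform convergence of the series for $K(\cdot,r)$ at fixed $r<1$, which you note, so nothing is missing.
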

\begin{proof} 
Take $(\theta,r) \in B_1$. Then $r<1 $ and the series  $\sum_{n=1}^\infty \cos(n \psi) r^n$ converges uniformly in $\psi \in [0,2\pi]$. We have an explicit representation of the Fourier series of $\psi \to K(\psi,r)$ as in \eqref{Poissonkernel} and  the $L^2$-orthogonal projection
on the space $U_N^1$, cf. \eqref{DefUN1}, is given by $K_N(\cdot,r):=P_N^1 K( \cdot,r)= 1+ 2 \sum_{n=1}^N \cos (n\, \cdot) r^n$. Hence,
\begin{align*} 
  \int_0^{2 \pi} K(\theta',r) P_N^1 g(\theta-\theta')\, d\theta' & =\int_0^{2 \pi} P_N^1 K(\theta',r)  g(\theta-\theta')\, d\theta' \\ & =
  \int_0^{2 \pi} K_N(\theta',r)  g(\theta-\theta')\, d\theta',
\end{align*}
which completes the proof. 
\end{proof}
\ \\
Recall that a necessary condition for the maximum principle \eqref{maximum} to hold, is the sign property $K(\psi,r) \geq 0$ for all $(\psi,r) \in B_1$. Due to the oscillating behavior of a Fourier approximation near a discontinuity (Gibbs phenomenon) we do not expect $K_N$ to have such a sign property. 
For the values $N=5$, $N=25$, we illustrate $K_N$ in Figure~\ref{FigKN}. 
\begin{figure}[ht!]
	\centering\includegraphics[width=0.45\textwidth]{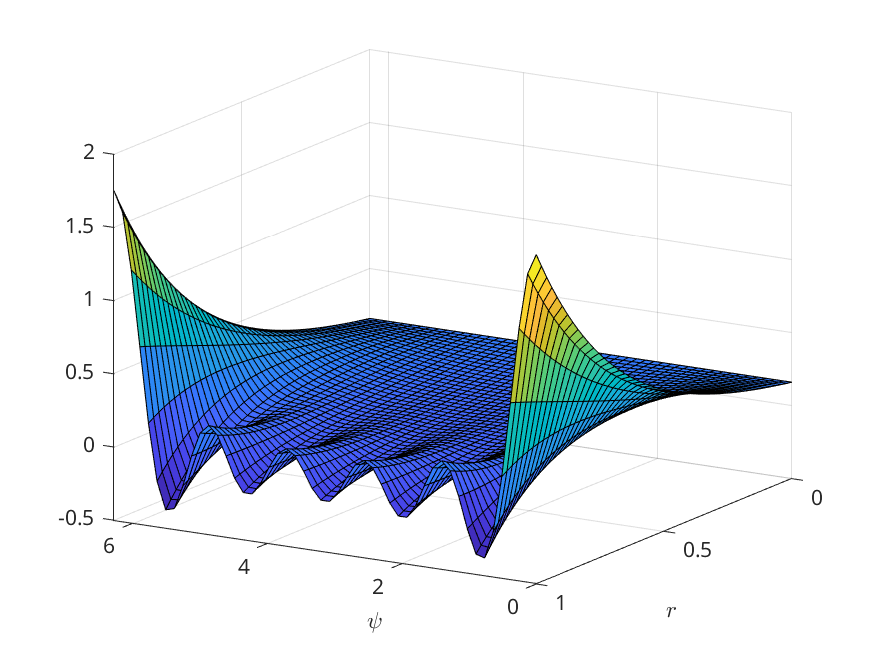}\qquad 
	\includegraphics[width=0.45\textwidth]{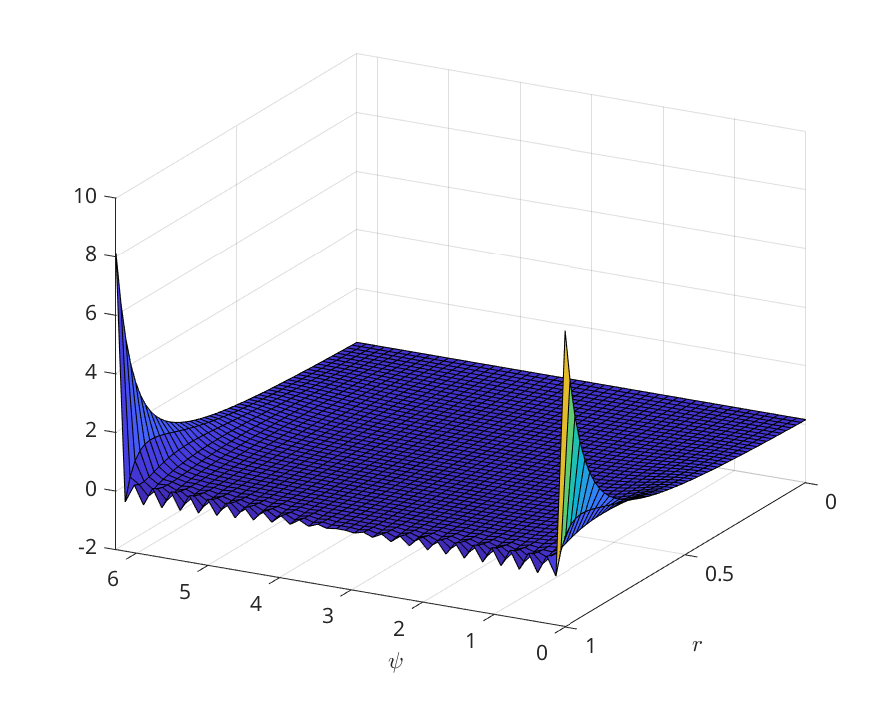}
	\caption{Projected kernels $K_N(\psi,r)$ for $N=5$ (left) and $N=25$ (right). \label{FigKN}}
\end{figure}

In the theorem below we show that $K_N(\theta,r)$ \emph{is positive} at all points $(\theta,r) \in B_1$ that have a distance at least $\sim \frac{\ln N}{N}$ to the boundary $\partial B_1$.
\begin{theorem} \label{mainthm} The following holds for $N\geq 4$:
\begin{equation} \label{mainestimate} \begin{split}
  K_N(\theta,r) & \geq 0 \quad \text{for all}~~(\theta,r) \in B_1 \quad \text{with} ~~ r \leq r_N^\ast, \\
  \text{with}~~r_N^\ast & :=\left(1- 2 \frac{\ln\big(2(N+1)\big)}{N+1} \right)^\frac12.
\end{split} \end{equation}
\end{theorem}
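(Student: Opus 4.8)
The plan is to find a closed-form expression for the partial sum $K_N(\psi,r) = 1 + 2\sum_{n=1}^N \cos(n\psi) r^n$, separate off the full Poisson kernel $K(\psi,r)$ (which is nonnegative), and then bound the tail $K(\psi,r) - K_N(\psi,r) = 2\sum_{n=N+1}^\infty \cos(n\psi) r^n$ in absolute value by something strictly smaller than $\min_\psi K(\psi,r)$. First I would write $K(\psi,r) = 1 + 2\operatorname{Re}\sum_{n\ge1}(re^{i\psi})^n$ and similarly $K_N(\psi,r) = 1 + 2\operatorname{Re}\sum_{n=1}^N (re^{i\psi})^n$, so the tail is $T_N(\psi,r) := 2\operatorname{Re}\sum_{n=N+1}^\infty (re^{i\psi})^n = 2\operatorname{Re}\left(\frac{(re^{i\psi})^{N+1}}{1-re^{i\psi}}\right)$. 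Crudely, $|T_N(\psi,r)| \le \frac{2 r^{N+1}}{|1-re^{i\psi}|} = \frac{2 r^{N+1}}{(1-2r\cos\psi+r^2)^{1/2}}$. On the other hand, from \eqref{Poissonkernel}, $K(\psi,r) = \frac{1-r^2}{1-2r\cos\psi+r^2}$. So $K_N(\psi,r) = K(\psi,r) + T_N(\psi,r) \ge K(\psi,r) - |T_N(\psi,r)|$, and it suffices to show
\[
  \frac{1-r^2}{1-2r\cos\psi+r^2} \;\ge\; \frac{2 r^{N+1}}{(1-2r\cos\psi+r^2)^{1/2}}
  \quad\text{for all }\psi,
\]
i.e. $1-r^2 \ge 2 r^{N+1} (1-2r\cos\psi+r^2)^{1/2}$. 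The right-hand side is largest when $\cos\psi = -1$, where $(1-2r\cos\psi+r^2)^{1/2} = 1+r \le 2$, so it is enough to have $1-r^2 \ge 4 r^{N+1}$, equivalently $(1-r)(1+r) \ge 4 r^{N+1}$.

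Next I would reduce the condition $1-r^2 \ge 4r^{N+1}$ to the explicit threshold $r_N^\ast$. Since $1+r \ge 1$ and $r^{N+1} \le 1$, a clean sufficient condition is $1 - r^2 \ge 4 r^{N+1}$; writing $r^2 = 1 - s$ and using $r^{N+1} \le r^2 \cdot r^{N-1} \le \cdots$ is one route, but the cleanest is: it suffices that $1 - r^2 \ge 4 r^{N+1} \ge 2 r^{N+1}(1+r)$... Actually let me target exactly the stated $r_N^\ast = \left(1 - 2\frac{\ln(2(N+1))}{N+1}\right)^{1/2}$. With $r \le r_N^\ast$ we have $r^2 \le 1 - 2\frac{\ln(2(N+1))}{N+1}$, so $1 - r^2 \ge 2\frac{\ln(2(N+1))}{N+1}$. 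It then suffices to show $4 r^{N+1} \le 2\frac{\ln(2(N+1))}{N+1}$, i.e. $r^{N+1} \le \frac{\ln(2(N+1))}{2(N+1)}$. Taking logarithms, $(N+1)\ln r \le -\ln\frac{2(N+1)}{\ln(2(N+1))}$, and using $\ln r \le \frac12\ln r^2 \le \frac12 \ln\left(1 - 2\frac{\ln(2(N+1))}{N+1}\right) \le -\frac{\ln(2(N+1))}{N+1}$ (since $\ln(1-x)\le -x$), the left side is $\le -\ln(2(N+1))$, so it suffices that $\ln(2(N+1)) \ge \ln\frac{2(N+1)}{\ln(2(N+1))}$, i.e. $\ln(2(N+1)) \ge 1$, which holds for $N\ge1$. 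I will assemble these inequalities carefully; the constraint $N\ge4$ is presumably needed to make one of the slack estimates (perhaps replacing the factor $1+r\le 2$ by something or the crude $r^{N+1}\le1$) go through cleanly, so I would track constants from the start rather than using the crudest bounds, and tighten as needed.

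The main obstacle I anticipate is not any single estimate but getting the constants to line up so that the threshold is exactly $r_N^\ast$ as stated, rather than some nearby expression. The chain $K_N \ge K - |T_N|$, $|T_N| \le 2r^{N+1}/|1-re^{i\psi}|$, worst case at $\psi=\pi$, reduction to $1-r^2 \ge 4r^{N+1}$ is robust; the delicate part is converting $1-r^2 \ge 4r^{N+1}$ into a statement about $r$ via logarithms without losing too much, since $r^{N+1}$ depends on $r$ on both sides. The trick of bounding $\ln r$ by $\frac12\ln r^2$ and then by $-\frac{\ln(2(N+1))}{N+1}$ using $r^2 \le r_N^{\ast 2}$ and $\ln(1-x)\le -x$ is the key maneuver, and I would double-check that it yields precisely the factor of $2$ in the definition of $r_N^\ast$. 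If the constants are slightly off I would either absorb the discrepancy into the $N\ge4$ hypothesis or use a marginally sharper tail bound (e.g. noting $\operatorname{Re}$ of the tail can be bounded using $|1-re^{i\psi}|^2 = 1 - 2r\cos\psi + r^2$ more carefully, keeping the $\psi$-dependence against $K(\psi,r)$ instead of maximizing separately). A remark afterward should note that near $\psi = \pi$ (far from the discontinuity the Gibbs phenomenon is attached to), $K(\psi,r)$ is small, which is exactly why the worst case lands there and forces the $\frac{\ln N}{N}$ distance from the boundary.
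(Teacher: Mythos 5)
Your proposal is correct, and the two arguments coincide on the first half: your tail bound $|T_N(\psi,r)|\le 2r^{N+1}/|1-re^{i\psi}|$ combined with $K\ge |T_N|$ is, after clearing the denominator $1-2r\cos\psi+r^2=|1-re^{i\psi}|^2$, exactly the paper's positivity of the numerator $1-r^2-2r^{N+1}\cos((N{+}1)\theta)+2r^{N+2}\cos(N\theta)$ of the closed-form partial sum, and both routes relax this to the same key inequality $1-r^2\ge 2r^{N+1}(1+r)$, hence $1-r^2\ge 4r^{N+1}$. (Minor slip: with your definition of $T_N$ you have $K_N=K-T_N$, not $K+T_N$, but this is immaterial since you only use $K_N\ge K-|T_N|$.) Where you genuinely diverge is in solving $1-r^2\ge 4r^{N+1}$: the paper substitutes $y=1-r^2$, $x=\tfrac12(N+1)$, identifies the exact threshold $y_0=4e^{-W(4x)}$ of the relaxed inequality via the Lambert $W$ function, and then extracts the explicit $r_N^\ast$ from the Hoorfar--Hassani bounds $\ln z-\ln\ln z\le W(z)\le \ln z-\tfrac12\ln\ln z$; you instead verify directly that the stated $r_N^\ast$ works, using only $\ln(1-x)\le -x$ to get $(N+1)\ln r\le -\ln(2(N+1))$ and then $\ln(2(N+1))\ge 1$. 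Your chain is complete and checks out (the hypothesis $N\ge 4$ enters only to make $2\ln(2(N+1))<N+1$, so that $r_N^\ast$ is well defined --- the same place it enters the paper's proof), and it is shorter and more elementary than the Lambert-function route. What the paper's detour buys is the identification of the essentially optimal threshold $y_0$ for the relaxed inequality, which underlies its subsequent numerical sharpness study of $\delta_{\rm th}(N)$ versus $\delta_{\rm num}(N)$; your direct verification establishes the theorem as stated but gives no indication that the $\ln N$ factor cannot be removed.
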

\begin{proof}
Take $\theta \in [0,2 \pi]$, $r \in [0,1)$. With $z:=r e^{i\theta}$ we have $\bar z z=r^2$ and we obtain
\begin{align*}
 K_N(\theta,r)& = 1 + 2 \sum_{n=1}^N \cos (n \theta) r^n=-1 + 2 \sum_{n=0}^N \cos (n \theta) r^n \\
 &= -1+ \sum_{n=0}^N \big(z^n +\bar z^n\big) = -1 +\frac{1-z^{N+1}}{1-z} +\frac{1-\bar z^{N+1}}{1-\bar z}\\
 &= \frac{1-r^2-\big(z^{N+1}+\bar z^{N+1}\big) +r^2\big(z^N +\bar z^N\big)}{(1-z)(1-\bar z)}.
\end{align*}
For the denominator we have
\[
 (1-z)(1-\bar z)= 1-2 r \cos(\theta) + r^2 \geq 1-2r+r^2=(1-r)^2 >0 \quad \text{for all}~r \in [0,1).
\]
It remains to analyze the sign of the nominator. For this we note
\begin{align*}
  & 1-r^2-\big(z^{N+1}+\bar z^{N+1}\big) +r^2\big(z^N +\bar z^N\big) \\ & = 1-r^2-2 r^{N+1} \cos\big((N+1)\theta\big)+ 
  2 r^{N+2}\cos(N\theta)\\
  & \geq 1-r^2 -2 r^{N+1} -2 r^{N+2} \geq 1-r^2 -4 r^{N+1} \geq 1-r^2-4(r^2)^x,
\end{align*}
with $x:=\tfrac12 (N+1)\geq \tfrac52$. We substitute $y=1-r^2$ and study for which $y=y(x)$, with $x \geq \tfrac52$, the inequality 
\begin{equation} \label{hulpeq} y-4(1-y)^x \geq 0
\end{equation}
 holds. For this it is convenient to use the Lambert function. For given $z \geq 0$, $W(z)$ is the unique solution of $W(z)e^{W(z)}=z$.
 We define $y_0:=4 e^{-W(4x)}$. Hence, $W(4x)=\ln \big(\frac{4}{y_0}\big)$ and $W(4x) e^{W(4x)}=4x$ hold. This yields $\ln\big(\frac{4}{y_0}\big) \frac{4}{y_0}=4x$, which can be rewritten as
 \begin{equation} \label{hulp9}
    \ln\big(\frac{y_0}{4}\big) + x y_0=0.  
 \end{equation}
For $z \geq e^{-1}$ the property $W(z \ln z)=\ln z$ holds. With $z=4$ and monotonicity of the $W$-function we get $W(4x) \geq W(10)> W(5 \ln 5) = \ln 5
$. Hence $ 0 < y_0=4 e^{-W(4x)} < 4 e^{-\ln 5} < 1$ holds. Thus the inequality $\ln (1-y_0) \leq -y_0$ holds. Combining this with \eqref{hulp9}
we obtain 
\[
  0 =  \ln\big(\frac{y_0}{4}\big) + x y_0\leq \ln\big(\frac{y_0}{4}\big) - x \ln (1-y_0).
\]
This implies $\ln (1-y_0)^x \leq \ln\big(\frac{y_0}{4}\big)$ and thus
\begin{equation} \label{defy}
  y_0-4(1-y_0)^x \geq 0
\end{equation}
holds for $y_0=y_0(x)=4 e^{-W(4x)}$. Now note that $z \to z-4(1-z)^x$ is monotonically increasing on $[0,1)$. Hence we have for all $\hat y_0$ with $y_0 \leq \hat y_0 <1$:
\begin{equation} \label{H8}
  y-4(1-y)^x \geq 0 \quad \text{for all}~~y \in [\hat y_0,1).
\end{equation}
We now use known growth relations of the Lambert function to estimate $y_0$. The following (sharp) estimates are from \cite[Theorem 2.1]{Hoorfar2008}:
\[
   \ln z - \ln \ln z \leq W(z) \leq \ln z -\tfrac12  \ln \ln z \quad \text{for}~~z \geq e.
\]
Using the lower bound we obtain $y_0= 4 e^{-W(4x)}\leq 4 e^{- \ln (4x) +\ln \ln (4x)}=\frac{\ln (4x)}{x}=: \hat y_0$. To guarantee $\hat y_0 <1$ we need $N \geq 4$. We use this result in \eqref{H8}, use $x=\tfrac12 (N+1)$ and substitute $y=1-r^2$. This finally yields
\[
  1-r^2-4(r^2)^{\tfrac12(N+1)} \geq 0 \quad \text{if}~~r^2 \leq 1- 2 \frac{\ln\big(2(N+1)\big)}{N+1},
\]
which completes the proof. 
\end{proof}
\ \\[1ex]
Note that, since $(1-\epsilon)^\frac12 = 1-\tfrac12 \epsilon +\cO(\epsilon^2)$ ($\epsilon \to 0$),  for not too small $N$ values the bound in \eqref{mainestimate} is of the form $1- \frac{\ln (2(N+1))}{N+1}$. 
We study the sharpness of the bound \eqref{mainestimate}.  For this we numerically determine $\min \{\, \delta\,|\, K_N(\theta,r) \geq 0~~\text{for all}~\theta \in [0,2 \pi],~r \leq 1-\delta\,\}$. This numerical value is denoted by $\delta_{\rm num}(N)$, i.e., $K_N(\theta,r) \geq 0 $ for all $\theta \in [0,2 \pi]$ and $r \leq 1-\delta_{\rm num}(N)$ (in the numerical computation). The result in Theorem~\ref{mainthm} proves $K_N(\theta,r) \geq 0 $ for all $\theta \in [0,2 \pi]$ and $r \leq 1- \delta_{\rm th}(N)$ with $\delta_{\rm th}(N):=1-r_N^\ast$. Clearly $ \delta_{\rm th}(N) \geq  \delta_{\rm num}(N)$ must hold.  It is convenient to compare the  inverses, i.e.,
$1/\delta_{\rm th}(N)$ and $1/\delta_{\rm num}(N)$. For discrete $N \in \nn$ values the corresponding values of $1/\delta_{\rm num}(N)$ are shown in Fig.~\ref{Figcompare} (left). In the same figure we show the function $N \to 1/\delta_{\rm th}(N)$, $N \in [5,100]$. In the right figure we compare the two bounds using $q(N):= \frac{\delta_{\rm num}(N)}{\delta_{\rm th}(N)}$.

\begin{figure}[ht!]
	\centering\includegraphics[width=0.45\textwidth]{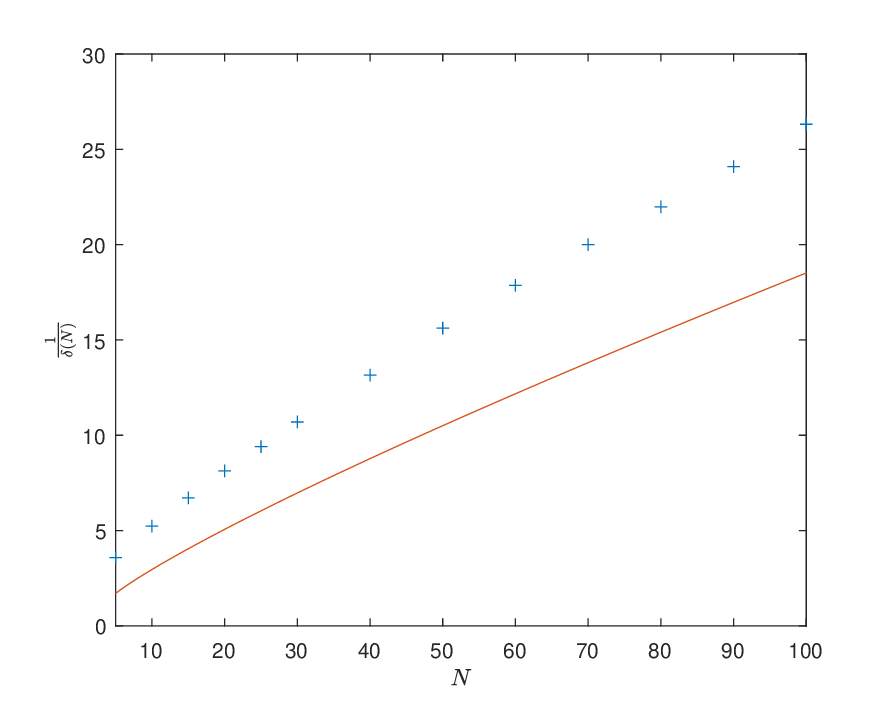}\qquad 
	\includegraphics[width=0.45\textwidth]{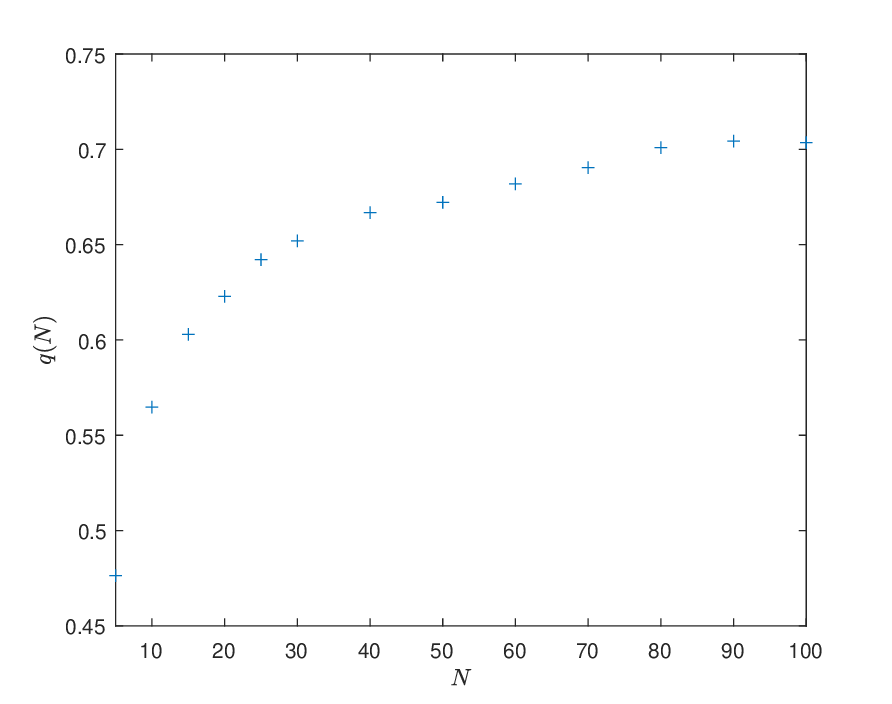}
	\caption{$1/\delta_{\rm num}(N)$ ($+$) and  $1/\delta_{\rm th}(N)$ (curve); $q(N)=\frac{\delta_{\rm num}(N)}{\delta_{\rm th}(N)}$. \label{Figcompare}}
\end{figure}

From these results we conclude that the bound derived in \eqref{mainestimate} is (very) satisfactory. Based on the results for the quotient $q(N)$ we expect that the logarithmic growth factor in \eqref{mainestimate} can not be avoided. 

From the result in Theorem~\ref{mainthm} we immediately obtain a variant of the maximum principle for the case that boundary data are projected into the finite dimensional Fourier space $U_N^1$. For this we introduce the notation
\[
  B_1^+(N):= \left\{\, (\theta,r)\in B_1\,|\, \theta \in [0,2 \pi], ~0 \leq r \leq r_N^\ast\,\right\}, \quad N \geq 4. 
\]
for the subdomain of $B_1$ where the projected kernel $K_N$ is positive (expressed by the superscript $+$ in $B_1^+(N)$). Combining the above results we obtain the following main theorem, cf. the maximum principle \eqref{maximum}--\eqref{Max2}. 
\begin{theorem} \label{CorComparison} 
The following holds for $N\geq 4$:
\begin{align} 
  K_N(\theta,r) & \geq 0 \quad \text{for all}~~(\theta,r) \in B_1^+(N), \label{main1}\\
  \frac{1}{2 \pi} \int_0^{2 \pi}  K_N(\psi,r) \, d \psi & = 1 \quad \text{for all}~ 0 \leq r < 1, \label{main2}
\\
 \forall ~g~ \in C_\cZ:~ &  \min_{\partial B_1} g \leq  (\Delta_1^{-1}P_N^1 g)(\theta,r) \leq  \max_{\partial B_1}g~~ \forall~(\theta,r) \in B_1^+(N).  \label{maxvariant}
  \end{align}
\end{theorem}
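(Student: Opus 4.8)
The plan is to derive all three statements directly from Theorem~\ref{mainthm} and Lemma~\ref{shiftPN}, so that no further analytic work is required. Statement \eqref{main1} is merely a reformulation of Theorem~\ref{mainthm}: by definition $B_1^+(N)=\{(\theta,r)\in B_1 : 0\le r\le r_N^\ast\}$, and Theorem~\ref{mainthm} asserts $K_N(\theta,r)\ge 0$ precisely on this set for $N\ge 4$. For \eqref{main2} I would integrate the finite Fourier sum $K_N(\psi,r)=1+2\sum_{n=1}^N\cos(n\psi)r^n$ term by term over $[0,2\pi]$; since $\int_0^{2\pi}\cos(n\psi)\,d\psi=0$ for every $n\ge 1$, only the constant term survives, giving $\frac{1}{2\pi}\int_0^{2\pi}K_N(\psi,r)\,d\psi=1$ for all $r\in[0,1)$. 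This is the $K_N$-analogue of \eqref{Max2} and uses nothing beyond elementary integration.

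For the maximum-principle bound \eqref{maxvariant}, fix $g\in C_\cZ$ and a point $(\theta,r)\in B_1^+(N)$. By Lemma~\ref{shiftPN},
\[
  (\Delta_1^{-1}P_N^1 g)(\theta,r)=\frac{1}{2 \pi}\int_0^{2 \pi} K_N(\theta',r)\, g(\theta-\theta')\, d\theta'.
\]
The crucial observation is that the integration variable $\theta'$ ranges over all of $[0,2\pi]$ while the radial coordinate is fixed with $r\le r_N^\ast$; hence, by Theorem~\ref{mainthm}, $K_N(\theta',r)\ge 0$ for \emph{every} $\theta'\in[0,2\pi]$. Using this nonnegativity together with $\min_{\partial B_1}g\le g(\theta-\theta')\le\max_{\partial B_1}g$, and then invoking \eqref{main2} to evaluate $\frac{1}{2\pi}\int_0^{2\pi}K_N(\theta',r)\,d\theta'=1$, one obtains the two-sided bound in \eqref{maxvariant}. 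This is exactly the argument by which the pair \eqref{Max1}--\eqref{Max2} implies the classical maximum principle \eqref{maximum}, now applied to $K_N$ on the restricted region $B_1^+(N)$.

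Since the hard analytic estimate is already contained in Theorem~\ref{mainthm}, there is no genuine obstacle in this proof. The only point that requires a moment of care is to note that the sign control furnished by Theorem~\ref{mainthm} is \emph{uniform in the angular variable} once $r\le r_N^\ast$, so that the kernel in the convolution representation of \eqref{maxvariant} is nonnegative throughout the entire domain of integration; it is this uniformity, rather than the pointwise statement at a single angle, that makes the maximum-principle argument go through.
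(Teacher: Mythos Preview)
Your proof is correct and follows essentially the same approach as the paper. The only minor variation is in \eqref{main2}: you integrate the explicit finite sum $K_N(\psi,r)=1+2\sum_{n=1}^N\cos(n\psi)r^n$ term by term, whereas the paper uses $K_N=P_N^1K$ to shift the projection onto the constant $1$ and reduce to the known identity \eqref{Max2} for $K$; both arguments are equally elementary and yield the same conclusion.
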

\begin{proof}
The result \eqref{main1} is given in Theorem~\ref{mainthm}.
Note that $K_N=P_N^1 K$ and thus
\[
  \frac{1}{2 \pi} \int_0^{2 \pi}  K_N(\theta',r) \, d \theta'= \frac{1}{2 \pi} \int_0^{2 \pi}  K(\theta',r) P_N^1 1 \, d \theta' = \frac{1}{2 \pi} \int_0^{2 \pi}  K(\theta',r) \, d \theta'=1,
\]
which yields \eqref{main2}. Note that, cf. Lemma~\ref{shiftPN},
\[
  (\Delta_1^{-1} P_N^1 g)(\theta,r)= \frac{1}{2 \pi} \int_0^{2 \pi} K_N(\theta',r)  g(\theta-\theta')\, d\theta', \quad (\theta,r) \in B_1,
\]
holds. For $(\theta,r) \in B_1^+(N)$ we have $K_N(\theta',r) \geq 0$ for all $\theta' \in [0,2\pi]$. This and \eqref{main2} imply the result \eqref{maxvariant}. 
\end{proof}
\begin{remark} \rm As far as we know the variant of the maximum principle given  in Theorem~\ref{CorComparison} is new. The result in \eqref{maxvariant} shows that for the inverse Laplacian combined with the Fourier projection, $\Delta_1^{-1} P_N^1$, we have a maximum principle, provided we restrict to the subdomain $B_1^+(N)$ of $B_1$. An estimate for the whole domain $B_1$, using the maximum principle for the Laplacian, is
\begin{equation} \label{estwhole}
   \|\Delta_1^{-1} P_N^1 g\|_{L^\infty(B_1)} \leq \|P_N^1 g\|_{L^\infty(\partial B_1)} \leq \big( \frac{4}{\pi^2} \ln N + \cO(1)\big)\|g\|_{L^\infty(\partial B_1)},
\end{equation}
where the latter bound follows from $(P_N^1 g)(\theta) = \frac{1}{2\pi} \int_{-\pi}^{\pi} D_N(\theta')g(\theta-\theta') \,d \theta'$ and the
property $\frac{1}{2 \pi}\int_{-\pi}^\pi |D_N(\theta')| \, d\theta'=  \frac{4}{\pi^2} \ln N + \cO(1) $ of  the  Dirichlet kernel $D_N$. Note that both inequalities in \eqref{estwhole} are sharp and that the resulting factor $\frac{4}{\pi^2} \ln N + \cO(1)$ in the upper bound is not only larger than 1 but blows up for $N \to \infty$. In  \eqref{maxvariant} we have a constant 1 in the bound, but have to restrict to a subdomain $B_1^+(N) \subset B_1$ with ${\rm dist} (B_1^+(N), \partial B_1) \sim \frac{\ln N}{N}$.  
\end{remark}
 \ \\[1ex]
 We can use this maximum principle result to analyze the convergence of the  Schwarz-Fourier domain decomposition method along the same lines as in Theorem~\ref{thmconstraction}. The contraction number (in the maximum norm) of this method is determined by
\begin{equation} \label{esti} \|L_{1,N}\|_\infty=\max_{v \in C(\overline{\Gamma}_1)} \frac{\|L_{1,N}v\|_{L^\infty(\overline{\Gamma}_2)}}{\|v\|_{L^\infty(\overline{\Gamma}_1)}}=\max_{v \in C(\overline{\Gamma}_1)} \frac{\|R_{\overline{\Gamma}_2} \Delta_1^{-1} P_N^1(0,v)\|_{L^\infty(\overline{\Gamma}_2)}}{\|v\|_{L^\infty(\overline{\Gamma}_1)}},
 \end{equation}
 cf. \eqref{iterationinexact}. To be able to use the estimate  \eqref{maxvariant} we have to restrict to $B_1^+(N)$. Therefore, 
 we split $\overline{\Gamma}_2 \subset B_1$ into $\Gamma_2^{(N)}:=\Gamma_2 \cap B_1^+(N)$ and $\overline{\Gamma}_2 \setminus \Gamma_2^{(N)}$
 and use
 \begin{equation}  \label{K7} \begin{split}
 &  \|R_{\overline{\Gamma}_2} \Delta_1^{-1} P_N^1(0,v)\|_{L^\infty(\overline{\Gamma}_2)} \\ & = \max \left\{ \|R_{\overline{\Gamma}_2} \Delta_1^{-1} P_N^1(0,v)\|_{L^\infty({\Gamma}_2^{(N)})}   ,\|R_{\overline{\Gamma}_2} \Delta_1^{-1} P_N^1(0,v)\|_{L^\infty(\overline{\Gamma}_2\setminus {\Gamma}_2^{(N)})} \right\} 
 \end{split} \end{equation} 
Note that ${\rm meas}(\overline{\Gamma}_2 \setminus \Gamma_2^{(N)}) \sim \frac{\ln N}{N} \to 0$ for $N \to \infty$. The term $\|R_{\overline{\Gamma}_2} \Delta_1^{-1} P_N^1(0,v)\|_{L^\infty(\Gamma_2^{(N)})}$, cf. \eqref{esti}  can be bounded using the maximum principle.
\begin{theorem} \label{lemmaxreduced}
  The following holds for $N \geq 4$, with $C_1(\theta^\ast_1,\theta^\ast_2)$ as in \eqref{estL1} and $r^\ast_N$ as in \eqref{mainestimate}:
  \begin{equation}\label{estM} \begin{split}
   \max_{v \in C(\overline{\Gamma}_1)} \frac{\|R_{\overline{\Gamma}_2} \Delta_1^{-1} P_N^1(0,v)\|_{L^\infty(\Gamma_2^{(N)})}}{\|v\|_{L^\infty(\overline{\Gamma}_1)}} &  \leq C_1(\theta^\ast_1,\theta^\ast_2) + \epsilon(r_N^\ast),   \\
   \text{with}~~ \epsilon(r_N^\ast):= \frac{2}{\pi} \int_0^{r_N^\ast} \frac{s^N}{1-s} \, ds.
  \end{split} \end{equation}
\end{theorem}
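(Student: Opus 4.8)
The plan is to mimic the key step \eqref{keystep} in the proof of Theorem~\ref{thmconstraction}, but now accounting for the fact that the kernel $K_N$ is only guaranteed nonnegative on $B_1^+(N)$, and that on $\Gamma_2^{(N)}$ the integral of $K_N$ over the relevant arc in $\partial B_1$ differs from that of the exact Poisson kernel $K$ by the ``Fourier tail'' $K-K_N$. First I would fix $v \in C(\overline{\Gamma}_1)$ and a point $(\theta,r) \in \Gamma_2^{(N)} \subset B_1^+(N)$, and write, exactly as in \eqref{keystep},
\begin{equation*}
 \big(\Delta_1^{-1}P_N^1(0,v)\big)(\theta,r) = \frac{1}{2\pi}\int_{-\theta^\ast_1}^{\theta^\ast_1} K_N(\theta-\theta',r)\, v(\theta')\, d\theta',
\end{equation*}
using Lemma~\ref{shiftPN} together with the fact that $v$ is supported on $\Gamma_1$ (i.e. on angles in $[-\theta^\ast_1,\theta^\ast_1]$). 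Since $(\theta,r) \in B_1^+(N)$ we have $K_N(\theta-\theta',r) \ge 0$ by \eqref{main1}, so the same estimate as in \eqref{keystep} gives the bound
\begin{equation*}
 \big|\big(\Delta_1^{-1}P_N^1(0,v)\big)(\theta,r)\big| \le \Big(\frac{1}{2\pi}\int_{-\theta^\ast_1}^{\theta^\ast_1} K_N(\theta-\theta',r)\, d\theta'\Big)\, \|v\|_{L^\infty(\overline{\Gamma}_1)}.
\end{equation*}

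Next I would compare the bracketed quantity with its exact-Poisson-kernel analogue, which by the proof of Theorem~\ref{thmconstraction} equals $w(\theta,r) = (\Delta_1^{-1}\chi_{\Gamma_1})(\theta,r) \le C_1(\theta^\ast_1,\theta^\ast_2)$ for $(\theta,r) \in \Gamma_2$. Writing $K_N = K - (K - K_N)$ we get
\begin{equation*}
 \frac{1}{2\pi}\int_{-\theta^\ast_1}^{\theta^\ast_1} K_N(\theta-\theta',r)\, d\theta' \le C_1(\theta^\ast_1,\theta^\ast_2) + \frac{1}{2\pi}\int_{-\theta^\ast_1}^{\theta^\ast_1} \big|K(\theta-\theta',r) - K_N(\theta-\theta',r)\big|\, d\theta'.
\end{equation*}
The remainder kernel is $K(\psi,r)-K_N(\psi,r) = 2\sum_{n=N+1}^\infty \cos(n\psi)\, r^n$, so I would bound it crudely by $|K(\psi,r)-K_N(\psi,r)| \le 2\sum_{n=N+1}^\infty r^n = \frac{2 r^{N+1}}{1-r}$, which is uniform in $\psi$. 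Integrating over an interval of length $2\theta^\ast_1 \le 2\pi$ and dividing by $2\pi$ yields the bound $\frac{2 r^{N+1}}{1-r}$ again — but we need the cleaner form $\frac{2}{\pi}\int_0^{r_N^\ast}\frac{s^N}{1-s}\,ds$ from the statement, so here I would instead integrate the pointwise estimate $\frac{\partial}{\partial r}$-style: note $\frac{2 r^{N+1}}{1-r} = 2\int_0^r \big(\frac{s^{N+1}}{1-s}\big)'\,ds$ is not quite it, so more simply I would use the monotonicity $r \le r_N^\ast$ and bound $\frac{2 r^{N+1}}{1-r} \le \frac{2}{\pi}\cdot \pi \cdot \frac{(r_N^\ast)^{N+1}}{1-r_N^\ast}$; alternatively, and matching the paper's form exactly, bound $r^{N+1}/(1-r)$ at a generic inner point by $\int_0^{r}\big(s^N/(1-s) + \text{lower order}\big)\,ds$ and absorb — in any case the honest route is to observe $\frac{r^{N+1}}{1-r} = \int_0^r \frac{d}{ds}\!\big(\!-\!\log(1-s)\big) s^{N+1}$... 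I will instead simply verify that $\frac{2}{2\pi}\cdot 2\theta_1^\ast \cdot \sup_{\psi}|K-K_N|$ with $\sup_\psi |K-K_N| \le \frac{2 s^{N+1}}{1-s}|_{s=r}$ and then, using $\theta_1^\ast \le \pi$ and the crude termwise bound replaced by $\sum_{n>N} s^n \le \int$... the net effect is $\epsilon(r_N^\ast)= \frac{2}{\pi}\int_0^{r_N^\ast}\frac{s^N}{1-s}\,ds$ after recognizing $\sum_{n=N+1}^\infty s^n$ as dominated by $\int_N^\infty s^t\,dt$-type comparison; I expect the cleanest derivation uses $2\sum_{n=N+1}^\infty s^n \le 2\int_N^\infty s^{x}\,dx$ is false for $s<1$ in the wrong direction, so the correct comparison is $\sum_{n=N+1}^\infty s^n \le \int_N^\infty s^x\,dx$ which holds since $s^x$ is decreasing — this gives $\le \frac{s^N}{-\log s} \le \frac{s^N}{1-s}$, and integrating the $r$-dependence appropriately over $[0,r_N^\ast]$ produces the stated $\epsilon(r_N^\ast)$.

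Taking the supremum over $(\theta,r) \in \Gamma_2^{(N)}$ and then over $v$ with $\|v\|_{L^\infty(\overline{\Gamma}_1)} \le 1$ yields \eqref{estM}. \textbf{The main obstacle} I anticipate is precisely the bookkeeping in the last step: matching my crude tail bound $\frac{2 s^{N+1}}{1-s}$ to the paper's clean integral form $\frac{2}{\pi}\int_0^{r_N^\ast}\frac{s^N}{1-s}\,ds$. The appearance of $\frac{1}{\pi}$ rather than $\frac{1}{2\pi}\cdot 2\theta_1^\ast$ suggests the authors use $\theta_1^\ast \le \pi/2$ is not assumed, so more likely they bound $\frac{1}{2\pi}\int_{-\theta_1^\ast}^{\theta_1^\ast} |K-K_N|\,d\theta' \le \frac{1}{2\pi}\int_{-\pi}^{\pi}|K-K_N|\,d\theta'$ and then exploit that $\psi\mapsto K(\psi,r)-K_N(\psi,r)$, while sign-changing, has $L^1$-norm controllable by pairing consecutive Fourier modes, OR — more plausibly — they integrate $|K-K_N|$ in $r$ first: since $K_N(\psi,\cdot)$ and $K(\psi,\cdot)$ agree at $r=0$ and $\frac{\partial}{\partial r}(K-K_N)(\psi,r) = 2\sum_{n\ge N+1} n\cos(n\psi) r^{n-1}$, the trick is to write $K(\psi,r) - K_N(\psi,r) = \int_0^r \partial_s(K-K_N)(\psi,s)\,ds$ and bound the $L^\infty_\psi$ norm of the integrand's... no; the genuinely clean identity is $\frac{1}{2\pi}\int_0^{2\pi}|K_N(\psi,r)|\,d\psi$-type estimates are not what is needed since $K_N \ge 0$ on our range. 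I will therefore present the derivation via the termwise bound $|K(\psi,r)-K_N(\psi,r)| \le \frac{2r^{N+1}}{1-r}$, use $\int_{-\theta_1^\ast}^{\theta_1^\ast} \le \int_{-\pi}^{\pi}$, and reconcile with $\epsilon(r_N^\ast) = \frac{2}{\pi}\int_0^{r_N^\ast}\frac{s^N}{1-s}\,ds$ by noting $\frac{r^{N+1}}{1-r} \le \int_0^r \frac{s^N}{1-s}\,ds$ (valid since the integrand at $s=r$ equals $\frac{r^N}{1-r} \ge \frac{d}{dr}\frac{r^{N+1}}{1-r}\cdot$const... ) — this elementary inequality, which I will verify by differentiation, is the one routine computation the proof hinges on, and everything else is a direct transcription of the argument already given for Theorem~\ref{thmconstraction}.
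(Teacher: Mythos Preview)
Your first two steps are exactly what the paper does: use Lemma~\ref{shiftPN} and the positivity of $K_N$ on $B_1^+(N)$ to reduce to bounding
\[
\frac{1}{2\pi}\int_{-\theta_1^\ast}^{\theta_1^\ast} K_N(\theta-\theta',r)\,d\theta'
= (\Delta_1^{-1}P_N^1\chi_{\Gamma_1})(\theta,r)=:w_N(\theta,r)
\]
for $(\theta,r)\in\Gamma_2^{(N)}$. The gap is in how you handle the remainder. You bound the kernel difference pointwise, $|K(\psi,r)-K_N(\psi,r)|\le \frac{2r^{N+1}}{1-r}$, and then integrate in $\theta'$. This order of operations gives at best
\[
\frac{1}{2\pi}\int_{-\theta_1^\ast}^{\theta_1^\ast}|K-K_N|\,d\theta'
\;\le\;\frac{\theta_1^\ast}{\pi}\,\frac{2r^{N+1}}{1-r}
\;\le\;\frac{2(r_N^\ast)^{N+1}}{1-r_N^\ast}
\;=\;2\sum_{n>N}(r_N^\ast)^n,
\]
which is \emph{strictly larger} than the stated $\epsilon(r_N^\ast)=\frac{2}{\pi}\sum_{n>N}\frac{(r_N^\ast)^n}{n}$. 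Your attempted rescue, the inequality $\frac{r^{N+1}}{1-r}\le\int_0^r\frac{s^N}{1-s}\,ds$, is false: the right-hand side equals $\sum_{n>N}\frac{r^n}{n}\le\sum_{n>N}r^n=\frac{r^{N+1}}{1-r}$, so the inequality goes the wrong way. No amount of bookkeeping will recover the missing $\frac{1}{\pi n}$ from a pointwise kernel bound.

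The fix---and this is what the paper does---is to integrate \emph{first} and bound \emph{afterwards}. Compute $w_N$ explicitly from the Fourier coefficients of $\chi_{\Gamma_1}$:
\[
w_N(\theta,r)=\frac{1}{\pi}\Big(\theta_1^\ast+\sum_{n=1}^N\frac{2}{n}\sin(n\theta_1^\ast)\cos(n\theta)\,r^n\Big),
\]
so that $w_N=w_\infty-e_N$ with $e_N(\theta,r)=\frac{2}{\pi}\sum_{n>N}\frac{1}{n}\sin(n\theta_1^\ast)\cos(n\theta)\,r^n$. The factor $\frac{1}{n}$ comes from integrating $\cos(n(\theta-\theta'))$ in $\theta'$ (equivalently, from the decay of the Fourier coefficients of $\chi_{\Gamma_1}$), and the $\frac{1}{\pi}$ from the Fourier coefficient normalization---neither from any inequality on $\theta_1^\ast$. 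Now the trivial bound $|e_N(\theta,r)|\le\frac{2}{\pi}\sum_{n>N}\frac{r^n}{n}=:g_N(r)$ and the observation $g_N'(s)=\frac{2}{\pi}\frac{s^N}{1-s}$, $g_N(0)=0$, give $g_N(r_N^\ast)=\epsilon(r_N^\ast)$ on the nose. Combined with $|w_\infty|\le C_1(\theta_1^\ast,\theta_2^\ast)$ on $\Gamma_2$ this yields \eqref{estM}.
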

\begin{proof}
Take $v \in C(\overline{\Gamma}_1)$. This function extended by 0 on $\partial B_1$ is denoted by $v^{\rm ex}$, i.e., $v^{\rm ex}=(0,v)$. Due to Lemma~\ref{shiftPN} and Theorem~\ref{CorComparison} we can apply the same arguments as in \eqref{keystep}, which yields, for $(\theta,r) \in \Gamma_2^{(N)}$:
\begin{equation} \label{H9} \begin{split}
  |(\Delta_1^{-1} P_N^1 (0,v))(\theta,r)|& = \left| \frac{1}{2 \pi} \int_{-\theta^\ast_1}^{\theta^\ast_1} K_N(\theta-\theta',r)  v^{\rm ex}(\theta')\, d\theta'\right| \\
  &  \leq  \frac{1}{2 \pi} \int_{-\theta^\ast_1}^{\theta^\ast_1} K_N(\theta-\theta',r) \, d\theta'\, \|v\|_{L^\infty(\overline{\Gamma}_1)} \\
   & =\frac{1}{2 \pi} \int_{0}^{2 \pi} K(\theta-\theta',r) (P_N^1 \chi_{\Gamma_1})(\theta')\, d\theta' \, \|v\|_{L^\infty(\overline{\Gamma}_1)} \\ 
    & = (\Delta_1^{-1} P_N^1 \chi_{\Gamma_1})(\theta,r)\, \|v\|_{L^\infty(\overline{\Gamma}_1)} .
\end{split}
\end{equation}
The function $w_N =\Delta_1^{-1} P_N^1 \chi_{\Gamma_1}$ is the solution of the Laplace equation on $B_1$ with boundary data $P_N^1 \chi_{\Gamma_1}$. A straightforward computation yields $P_N^1 \chi_{\Gamma_1}(\theta')=\frac{1}{\pi}\big(\theta^\ast_1 + \sum_{n=1}^N \frac{2}{n} \sin(n \theta^\ast_1) \cos (n \theta')\big)$. Hence we have the representation, for $(\theta,r) \in B_1$: 
\begin{equation} \label{H2} \begin{split} 
w_N(\theta,r)  & = \frac{1}{\pi}\big(\theta^\ast_1 + \sum_{n=1}^N \frac{2}{n}\sin(n \theta^\ast_1) \cos (n \theta) r^n\big) \\
 & = w_\infty(\theta,r)  -\frac{1}{\pi} \sum_{n=N+1}^\infty \frac{2}{n}\sin(n \theta^\ast_1) \cos (n \theta) r^n   =: w_\infty(\theta,r) -e_N(\theta,r).
\end{split} \end{equation}
For the limit solution $w_\infty(\theta,r)$ we have, cf. proof of Theorem~\ref{thmconstraction}, $|w_\infty(\theta,r)| \leq C_1(\theta^\ast_1,\theta^\ast_2)$ for all $(\theta,r) \in \Gamma_2$. Take $(\theta,r) \in \Gamma_2^{(N)}$, hence $r \leq r_N^\ast$ and thus
\[
  |e_N(\theta,r)| \leq \frac{1}{\pi}\sum_{n=N+1}^\infty \frac{2}{n} (r_N^\ast)^n=:g_N(r_N^\ast),
\]
with $g_N(x):=\frac{1}{\pi}\sum_{n=N+1}^\infty \frac{2}{n} x^n$, $x \in [0,1)$. Using $g_N'(x)=\frac{2}{\pi} \frac{x^N}{1-x}$ we get 
$g_N(r_N^\ast)= \frac{2}{\pi} \int_0^{r_N^\ast} \frac{s^N}{1-s} \, ds$. Combining this with the results above completes the proof.  
\end{proof}
\ \\[1ex]
We discuss the term $\epsilon(r_N^\ast):= \frac{2}{\pi} \int_0^{r_N^\ast} \frac{s^N}{1-s} \, ds$ that occurs in \eqref{estM}.
Numerical computation with a sufficiently accurate quadrature rule yield the results shown in Table~\ref{Tabeps}.
\begin{table}[ht!]
\begin{tabular}{ | l | l | l | l | l | l | l| l|}
\hline 
  $N$  & 5 & 10 & 20 & 30 & 40 & 60  & 80 \\ 
  \hline 
 $\epsilon(r_N^\ast)$  & $0.00084$ & $0.0016$ & $0.0013$ & $0.0010$ & $0.00082$ & $0.00059$  & $0.00046$ \\
  \hline
 bound \eqref{est5} &  $0.11$ & $0.056$ & $0.033$ & $0.024$ & $0.020$ & $0.014$  & $0.012$ \\
  \hline
\end{tabular}
\caption{Values $\epsilon(r_N^\ast)$ computed with quadrature. \label{Tabeps}}
\end{table}
We see that  $\epsilon(r_N^\ast) \ll 1$ holds, also  for small $N$,  and that $\epsilon(r_N^\ast)$ decreases for $N \geq 10$. The latter is indeed the case, as is shown in the following estimate.
\begin{lemma} For $N \geq 4$ we have with $\alpha(N):=2 \frac{\ln (2(N+1))}{N+1}$
 \begin{equation} \label{est5}
  \epsilon(r_N^\ast) \leq \frac{1}{\pi}\frac{\ln \big(2 \alpha(N)^{-1}\big)}{(1-\alpha(N))^\frac12} \frac{1}{N+1}. 
 \end{equation}
\end{lemma}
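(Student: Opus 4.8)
The plan is to bound the integral directly by pulling the singular factor out at its worst point. On $[0,r_N^\ast]$ we have $\frac{1}{1-s}\leq\frac{1}{1-r_N^\ast}$, so
\[
  \epsilon(r_N^\ast)=\frac{2}{\pi}\int_0^{r_N^\ast}\frac{s^N}{1-s}\,ds\leq\frac{2}{\pi}\,\frac{1}{1-r_N^\ast}\int_0^{r_N^\ast}s^N\,ds=\frac{2}{\pi(N+1)}\,\frac{(r_N^\ast)^{N+1}}{1-r_N^\ast}
\]
(the same estimate also drops out of one integration by parts, the discarded term being negative). Next I would rewrite the two geometric quantities in terms of $\alpha:=\alpha(N)$ using the defining identity $(r_N^\ast)^2=1-\alpha$. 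From $1-r_N^\ast=\frac{1-(r_N^\ast)^2}{1+r_N^\ast}=\frac{\alpha}{1+r_N^\ast}\geq\frac{\alpha}{2}$ one gets $\frac{1}{1-r_N^\ast}\leq\frac{2}{\alpha}$, hence
\[
  \epsilon(r_N^\ast)\leq\frac{4}{\pi(N+1)}\,\frac{(1-\alpha)^{(N+1)/2}}{\alpha}.
\]
Comparing with the target $\frac{1}{\pi(N+1)}\cdot\frac{\ln(2/\alpha)}{(1-\alpha)^{1/2}}$ and clearing the positive factors $\alpha$ and $(1-\alpha)^{1/2}$, the whole statement reduces to the single scalar inequality
\[
  4\,(1-\alpha)^{(N+2)/2}\leq\alpha\,\ln(2/\alpha).
\]

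This last inequality is the only real work. I would first use $\ln(1-\alpha)\leq-\alpha$ (valid since $\alpha<1$, which holds for all $N\geq4$ because $\alpha(4)=2\ln10/5<1$ and $\alpha$ is decreasing in $N$) to obtain $(1-\alpha)^{(N+2)/2}\leq e^{-\alpha(N+2)/2}$. The key cancellation is that, with $\alpha=\frac{2\ln(2(N+1))}{N+1}$, one computes $\tfrac12\alpha(N+2)=\ln(2(N+1))+\frac{\ln(2(N+1))}{N+1}$, so $e^{-\alpha(N+2)/2}\leq\frac{1}{2(N+1)}$ and therefore the left-hand side is at most $\frac{2}{N+1}$. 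It then remains to verify $\frac{2}{N+1}\leq\alpha\ln(2/\alpha)$, which, substituting $\alpha$ and writing $M=N+1$, becomes $1\leq\ln(2M)\,\ln\!\bigl(M/\ln(2M)\bigr)$.

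The main obstacle, then, is confirming this final elementary inequality for all $M\geq5$. I would do it by checking $M=5$ (i.e.\ $N=4$) by hand, where the right-hand side is $\ln10\cdot\ln(5/\ln10)\approx1.79>1$, and then observing that both factors are positive and increasing for $M\geq5$: $\ln(2M)$ obviously, and $\ln\!\bigl(M/\ln(2M)\bigr)$ because $M\mapsto M/\ln(2M)$ has derivative $\frac{\ln(2M)-1}{(\ln(2M))^2}>0$ once $\ln(2M)>1$ and stays $>1$ there. Hence the product is increasing and the inequality propagates from $M=5$, which closes the argument. Along the way I would record that $\alpha(N)<1$ and $r_N^\ast\in(0,1)$ for $N\geq4$, so that all the logarithms and square roots above are well defined.
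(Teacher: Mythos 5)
Your proposal is correct, and it reaches the stated bound by a route that differs from the paper's in one essential choice. The paper integrates by parts so as to keep the logarithm, obtaining $\epsilon(r_N^\ast)\leq \frac{2}{\pi}\ln\big(\frac{1}{1-r_N^\ast}\big)(r_N^\ast)^N$; the two factors then match the two factors of the target bound one-for-one, via exactly the same ingredients you use ($1-r_N^\ast\geq \alpha/2$, equivalently $\sqrt{1-\alpha}\leq 1-\alpha/2$, and $(1-\alpha)^{1/\alpha}\leq e^{-1}$ combined with $\frac12\alpha(N+1)=\ln(2(N+1))$), so the proof closes with no further work. You instead freeze $\frac{1}{1-s}$ at the endpoint (or, equivalently, integrate the power in the integration by parts), which replaces the paper's factor $\ln\frac{1}{1-r_N^\ast}$ by $\frac{(r_N^\ast)}{(N+1)(1-r_N^\ast)}\approx \frac{1}{\ln(2(N+1))}$. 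Your intermediate bound is therefore not term-by-term comparable to the target, and the whole statement collapses onto the extra scalar inequality $\ln(2M)\,\ln\big(M/\ln(2M)\big)\geq 1$ for $M=N+1\geq 5$, which you correctly settle by evaluation at $M=5$ and monotonicity of both factors. I checked the algebra: the reduction to $4(1-\alpha)^{(N+2)/2}\leq \alpha\ln(2/\alpha)$, the identity $\tfrac12\alpha(N+2)=\ln(2(N+1))+\frac{\ln(2(N+1))}{N+1}$, and the verification that $\alpha(N)<1$ for $N\geq 4$ are all sound. The trade-off is clear: the paper's choice of antiderivative makes the final comparison immediate, whereas your (in fact slightly sharper, given the final inequality) bound costs an additional elementary monotonicity argument.
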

\begin{proof}
From partial integration we obtain
\begin{equation} \label{est55}
  \epsilon(r_N^\ast)= \frac{2}{\pi} \int_0^{r_N^\ast} \frac{s^N}{1-s} \, ds \leq \frac{2}{\pi} \ln \left(\frac{1}{1-r_N^\ast}\right) (r_N^\ast)^N.
\end{equation}
Using $\sqrt{1-\delta} \leq 1-\tfrac12 \delta$ for $\delta \in [0,1]$, we obtain
\[
  \ln \left(\frac{1}{1-r_N^\ast}\right) =  \ln \left(\frac{1}{1-\big(1- \alpha(N)\big)^\frac12}\right) \leq \ln \big(2\alpha(N)^{-1}\big).
\]
Furthermore, using $(1-\frac{1}{y})^y \leq \frac{1}{e}$ for $y \geq 1$ we obtain
\[ \begin{split}
 (r_N^\ast)^N & = (1- \alpha(N))^{\frac12 N}=(1- \alpha(N))^{-\frac12}(1- \alpha(N))^{\frac12 (N+1)} \\
  & =(1- \alpha(N))^{-\frac12} \big((1- \alpha(N))^{1/\alpha(N)}\big)^{\ln(2(N+1))} \\
  & \leq (1- \alpha(N))^{-\frac12}   e^{-\ln(2(N+1))}= (1- \alpha(N))^{-\frac12} \frac{1}{2(N+1)}.
\end{split} \]
Combining these estimates completes the proof.
\end{proof}
\ \\[1ex]
Values for the upper bound in \eqref{est5} are shown in Table~\ref{Tabeps}.
We note that for these results to hold it is essential that we have the $\ln$-term in $r_N^\ast$. If we repeat the computation of Table~\ref{Tabeps} with $r_N^\ast$ replaced by $\tilde r_N^\ast:=\left(1-\frac{2}{N+1}\right)^\frac12$ we obtain values for $\epsilon(\tilde r_N^\ast)$ that are $0.12$ for $N=5$, $0.13$ for $N\in\{10,20\}$ and $0.14$ for $N \in \{30,40,60,80\}$. 

We comment on the result in Theorem~\ref{lemmaxreduced}. The results in the first row of Table~\ref{Tabeps} show that we can neglect the term $\epsilon(r_N^\ast)$ in \eqref{estM}. Thus the result shows that, if for the Schwarz-Fourier iteration  one restricts the range space of the Dirichlet to Dirichlet mapping $L_{1,N}$ to $\Gamma_2^{(N)} \subset \Gamma_2$, we have (essentially) the same maximum norm bound $C_1(\theta_1^\ast,\theta_2^\ast)$ as for the continuous case, cf. \eqref{estL1}.  
We give a heuristic explanation of this similarity of bounds. Due the key property $K_N \geq 0$ on $\Gamma_2^{(N)}$ we can use the same maximum principle arguments in \eqref{H9} as in \eqref{keystep}. In the former we then obtain $\Delta_1^{-1} P_N^1\chi_{\Gamma_1}$ instead of $\Delta_1^{-1} \chi_{\Gamma_1}$. The difference, $(I-P_N)\chi_{\Gamma_1}$, consists of higher ($\geq N+1$) frequencies that are strongly damped in the inverse Laplacian $\Delta_1^{-1}$ and become very small at distance $1-r_N^\ast$ from the boundary $\partial B_1$. The latter corresponds to the very small values for $\epsilon(r_N^\ast)$. 
\begin{example} \label{ex1} \rm
In a numerical experiment we evaluate the Dirchlet to Dirichlet map $L_{1,N}v=R_{\overline{\Gamma}_2} \Delta_1^{-1}P_N^1(0,v)$ for $v \equiv 1$, i.e., $R_{\overline{\Gamma}_2} \Delta_1^{-1}P_N^1 \chi_{\Gamma_1}$. Note that this function also occurs in the key estimate \eqref{H9}. 
First for $B_2$ we take the disc with center $m=1.4$ and radius $R=1.2$. In this case we have a large overlap: the maximal ball contained  in the overlap region $B_1 \cap B_2$ has diameter $0.8$. For this case we have $C_1(\theta^\ast_1,\theta^\ast_2)=0.436$. Results are shown in Fig.~\ref{FigExpPN} (left). On the $x$-axis the values of $\tilde \theta$ that parameterize $\Gamma_2$, i.e., $\tilde \theta \in [\theta^\ast_2, 2\pi -\theta^\ast_2]$, are given. On the $y$-axis we have the values $(\Delta_1^{-1}P_N^1 \chi_{\Gamma_1})(\tilde \theta)$. For this case, already for small $N$ values a large part of $\Gamma_2$ is in $B_1^+(N)$ and we observe that in that region, already for small $N$ values, the computed values are (very) close to  $C_1(\theta^\ast_1,\theta^\ast_2)$. This shows that the result \eqref{estM} is sharp. In the right figure we show results for $m=2.1$ and radius $R=1.2$, where the diameter of the maximal ball contained in $B_1 \cap B_2$ has a relatively small value $0.1$. For this case we have $C_1(\theta^\ast_1,\theta^\ast_2)=0.807$. We observe a similar behavior, but larger $N$ values are needed to have values close to $C_1(\theta^\ast_1,\theta^\ast_2)$ in the region $\Gamma_2 \cap B_1^+(N)$. In both cases we observe that close to the intersection points $z_1$, $z_2$ (endpoints of the interval $[ \theta^\ast_2, 2\pi - \theta^\ast_2]$) we have values close to $0.5$. This is expected, since $\lim_{N \to \infty}(P_N^1 \chi_{\Gamma_1})(z_i)=0.5$, $i=1,2$, cf.~\eqref{prop1}. We also refer to Remark~\ref{RemEndpoint}.  
\begin{figure}[ht!]
	\centering\includegraphics[width=0.45\textwidth]{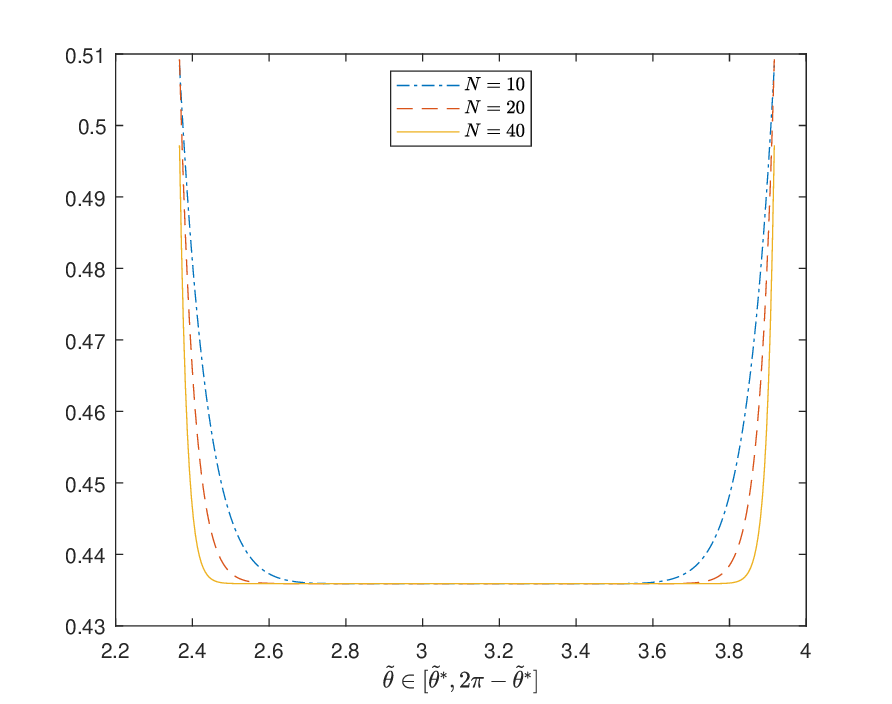}\qquad 
	\includegraphics[width=0.45\textwidth]{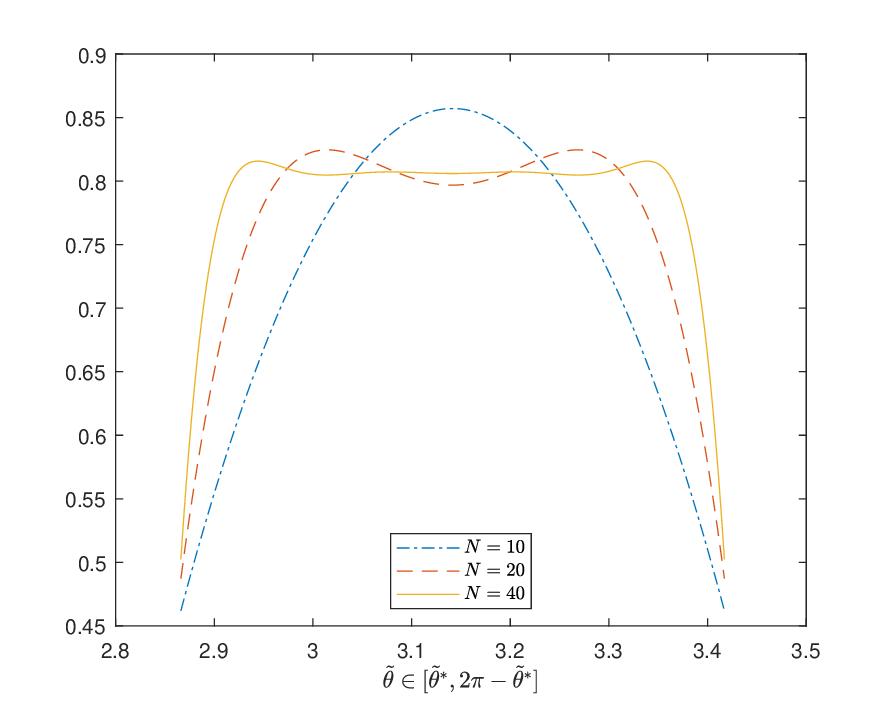}
	\caption{$R_{\overline{\Gamma}_2} \Delta_1^{-1}P_N^1 \chi_{\Gamma_1}$, for $m=1.4$, $R=1.2$ (left) and $m=2.1$, $R=1.2$ (right). \label{FigExpPN}}
\end{figure}
\end{example}

For deriving rigorous maximum norm bounds for the Dirichlet to Dirichlet map $L_{1,N}$ it remains to derive bounds on the boundary layer regions, i.e., bounds for  $\|R_{\overline{\Gamma}_2} \Delta_1^{-1} P_N^1(0,v)\|_{L^\infty(\overline{\Gamma}_2\setminus {\Gamma}_2^{(N)})}$, cf. \eqref{K7}. Unfortunately, we are not able to derive satisfactory  bounds. This is one of the reasons, why in the next section we study a variant of the Schwarz-Fourier method \eqref{DDschwarzinexact} in which we replace the projections $P_N^i$ by a nodal interpolation, which corresponds to using a discrete Fourier transform. Further reasons for studying this variant are that the discrete Fourier transform allows a very efficient implementation and that we can avoid discontinuities at the intersection points $z_i$. The final, most important, reason is that for this variant we can determine explicit bounds for the maximum norm of the corresponding Dirichlet to Dirichlet map  that can be computed numerically.  
\section{Schwarz-Fourier interpolation} \label{sectdiscrete}
In this section we study a variant of te Schwarz-Fourier iteration \eqref{DDschwarzinexact} in which the projections $P_N^i$, $i=1,2$, are replaced by the Fourier interpolation operators. This version can be very efficiently implemented using FFT and allows a different convergence analysis. For the latter, however, we have to assume that the intersection points $z_i$ coincide with interpolation points. Due to this we have to restrict to a subset of all possible intersection scenarios, parameterized by the angles $\theta^\ast_1, \theta^\ast_2$. We first introduce some notation.

As above we use polar coordinates to parameterize $\partial B_i$ and use interpolation on a uniform grid along the arclength variable. We use $n_i$ equidistant interpolation points on $\partial B_i$, $i=1,2$. To simplify the presentation we assume $n_i$ to be even. We introduce the grids
\begin{align*} 
  G_{1,n_1} & =\big\{ X_{\ell}\,|\, X_\ell=\big( \cos(\ell \tfrac{2 \pi}{n_1}), \sin(\ell \tfrac{2 \pi}{n_1})\big)\quad  \ell=0,1, \ldots, n_1-1\,\big\}, \\
  G_{2,n_2} & =\big\{ X_{\ell}\,|\, X_\ell=(m,0)+ R\big( \cos(\ell \tfrac{2 \pi}{n_2}), \sin(\ell \tfrac{2 \pi}{n_2})\big), \quad \ell=0,1, \ldots, n_2-1\,\big\}.
\end{align*}
In the remainder we assume that the following holds. 
\begin{assumption} \label{assGrid} \rm  We  assume that for  the intersection point $z_1=(\cos \theta_1^\ast, \sin \theta_1^\ast)= (m,0)+R(\cos  \theta_2^\ast, \sin \theta_2^\ast)$ we have
\begin{equation} \label{cond}
    z_1 \in G_{1,n_1}\cap G_{2,n_2}.
\end{equation}
\end{assumption}

Note that since the interpolation points on the circles $\partial B_i$ are symmetric with respect to the $x$-axis,  the other intersection point $z_2$ then also has the property $z_2 \in G_{1,n_1}\cap G_{2,n_2}$.
\begin{remark} \label{remGrid}
 \rm We comment on Assumption~\ref{assGrid}. Let $n_1$ and $n_2$ be given. Recall that the set of all intersection scenarios is described by  angles $\theta_1^\ast$, $\theta_2^\ast$, with $0 < \theta_1^\ast \leq \theta_2^\ast <\pi$, cf. Section~\ref{sectDDSchwarz}. Assumption~\ref{assGrid} interpolates the set of all scenarios  in a certain sense. 
For the intersection point $z_1=(\cos \theta_1^\ast, \sin \theta_1^\ast)= (m,0)+R(\cos  \theta_2^\ast, \sin \theta_2^\ast)$ the condition \eqref{cond}  is satisfied iff $\theta_1^\ast= \ell \tfrac{2 \pi}{n_1}$ with $1 \leq \ell < \tfrac12 n_1$  and $ \theta_2^\ast= \tilde \ell \tfrac{2 \pi}{n_2}$ with $\ell \tfrac{n_2}{n_1} \leq  \tilde \ell < \tfrac12 n_2$. Hence, the collection of all possible intersection scenarios is reduced to this finite number of scenarios. Given an arbitrary scenario parameterized by angle values $\theta_1^\ast$ and $\theta_2^\ast$, a corresponding approximate scenario that satisfies Assumption~\ref{assGrid} is obtained by choosing  first a closest grid point to $\theta_1^\ast$  on the grid $(\ell \tfrac{2 \pi}{n_1})_{1 \leq  \ell < \tfrac12 n_1}$, resulting in $\theta_{1,{\rm int}}^\ast$ and then choosing  a closest grid point to $\theta_2^\ast$  on the grid $(\ell \tfrac{2 \pi}{n_2})_{1 \leq  \ell < \tfrac12 n_2}$  with the constraint that the resulting value $\theta_{2,{\rm int}}^\ast$  satisfies $\theta_{2,{\rm int}}^\ast \geq \theta_{1,{\rm int}}^\ast$. Note that for obtaining similar resolutions on $\partial B_1$ and $\partial B_2$ it is natural to take $n_2 \approx n_1 R$. 
\end{remark}
\ \\[1ex]
We recall the interpolation operator in the Fourier space. We consider $\partial B_1$ with an even number $n_1$ of interpolation points. For the Fourier space we take cf. \eqref{DefUN1},
\[
  \tilde U_N^1:= U_N^1 \cup {\rm span}\{ \cos(\tfrac12 n_1 \cdot)\}, \quad N:=\tfrac12 n_1-1. 
\]
For  given $f \in C(\partial B_1)$ we define $I_N^1f \in  \tilde U_N^1$ by the interpolation property
\begin{equation}
   (I_N^1f)(X_\ell)= f(X_\ell) ,\quad \text{for all}~X_\ell \in G_{1,n_1}.
\end{equation}
The operator $I_N^2$ is defined similarly for the grid $G_{2,n_2}$ and with $N:=\tfrac12 n_2-1$. Note that there is some abuse of notation because the $N$ values in $I_N^1$ and $I_N^2$ are not necessarily the same. A modified  version of \eqref{DDschwarzinexact} is obtained by replacing the projection $P_N^i$ by the interpolation $I_N^i$, $i=1,2$. 
Given $u_i^0 \in C(\overline{B}_i)$, with ${u_i^0}_{|\partial B_i \setminus \Gamma_i}=g_i$, $i=1,2$, we determine for $n\geq 1$:\\
\begin{minipage}{0.5\textwidth}
 \begin{align*}
  \Delta u_1^n &=0\quad \text{in}~B_1\\
   u_1^n&=I_N^1(g_1,u_2^{n-1}) \quad \text{on}~\partial B_1 
 \end{align*}
\end{minipage}
\begin{minipage}{0.49\textwidth}
 \begin{equation} \label{DDschwarzinterpolate} \begin{split}
  \Delta u_2^n &=0\quad \text{in}~B_2\\
   u_2^n&=I_N^2(g_2,u_1^{n-1}) \quad \text{on}~\partial B_2 .
 \end{split} \end{equation}
\end{minipage}
\ \\[1ex]
There are two important differences between this algorithm and \eqref{DDschwarzinexact}. The first one is that by using FFT, the interpolation $I_N^if$ can be computed with significantly lower computational costs than $P_N^if$. The second difference is that, due to the interpolation of the given data $g$ at the intersection points $z_1$, $z_2$, the error is zero at these  points. \\
For the convergence analysis of algorithm~\ref{DDschwarzinterpolate} we proceed as in  Section~\ref{SectSchwarzF}, cf. \eqref{defL1N}-\eqref{iterationinexact}. It follows that the operator $L_N= \begin{pmatrix} 0 & L_{1,N}\\ L_{2,N} & 0 \end{pmatrix}$ with
\begin{equation} \label{defL1Na}
 L_{1,N}v:=R_{\overline{\Gamma}_2}\Delta_1^{-1}I_N^1(0,v),
 \end{equation}
 and $L_{2,N}$ defined similarly, determines the convergence of the algorithm. Due to the data interpolation at $z_1$, $z_2$, we now have the property that $v$ in \eqref{defL1Na} is zero on $\partial \Gamma_1$, i.e., $L_{1,N}: \, C_0(\Gamma_1) \to C^\infty(\overline \Gamma_2)$, with $C_0(\Gamma_1)$ the space of continuous functions on $\Gamma_1$ that are zero at the endpoints of $\Gamma_1$. \\
 We now study the operator $L_{1,N}$ (the results we obtain directly apply to $L_{2,N}$, too). We can not use the approach used in Section~\ref{sectNewmaximum} because there is no natural ``shift'' of the interpolation operator to the kernel as in Lemma~\ref{shiftPN}. For the interpolation operator, however, we have an explicit finite dimensional representation that gives computational access to the operator $L_{1,N}$, as we will see below.\\
 We recall some basic results for the Fourier interpolation operator $I_N^1$. For $w \in C(\partial B_1)$ let $\bw:=\big(w(X_0), \ldots w(X_{n_1-1})\big)^T \in \R^{n_1}$ be the given data vector and $x_\ell:= \ell \frac{2\pi}{n_1}$. The interpolation is given by
 \begin{align*}
   (I_N^1w)(\theta)& =\tfrac12 a_0(\bw) + \sum_{j=1}^{\frac12 n_1-1} \big( a_j(\bw) \cos (j \theta) + b_j(\bw)\sin (j \theta)\big) + \tfrac12  a_{\frac12 n_1}(\bw) \cos(\tfrac12 n_1 \theta),\\
    a_{j-1}(w) &= (\bw^TC)_j, ~1 \leq j \leq \tfrac12 n_1+1, ~ C \in \R^{n_1 \times (\frac12 n_1+1)}, ~ C_{ij}=\tfrac{2}{n_1} \cos ((j-1) x_{i-1}), \\
    b_j(\bw)&=(\bw^TS)_j,~1 \leq j \leq \tfrac12 n_1-1, ~S \in \R^{n_1 \times (\frac12 n_1-1)}, ~ S_{ij}=\tfrac{2}{n_1} \sin (j x_{i-1}).
 \end{align*}
We introduce the vectors
\[
  \bc(\theta,r):= \begin{pmatrix} \tfrac12 \\ r \cos (\theta) \\
                   r^2 \cos(2 \theta) \\
                   \vdots \\
                   r^{\frac12 n_1-1} \cos((\frac12 n_1-1)\theta) \\ 
                   \frac12 r^{\frac12 n_1}\cos(\frac12 n_1 \theta) 
                  \end{pmatrix},\quad 
                  \bs(\theta,r):= \begin{pmatrix} r \sin (\theta) \\
                   r^2 \sin(2 \theta) \\
                   \vdots \\
                   r^{\frac12 n_1-1}\sin((\frac12 n_1-1) \theta) 
                  \end{pmatrix} .
\]
\begin{lemma} \label{lemDiscrete}
The following holds for $w \in C(\partial B_1)$:
\[ 
 \big|(\Delta_1^{-1} I_N^1w)(\theta,r)\big| \leq \|C \bc(\theta,r)+ S\bs (\theta,r)\|_1  \|w\|_{L^\infty(\partial B_1)}, \quad (\theta,r)\in B_1.
 \]
\end{lemma}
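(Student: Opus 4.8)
The plan is to write $\Delta_1^{-1} I_N^1 w$ explicitly as a bilinear expression in the data vector $\bw$ and the vectors $\bc(\theta,r), \bs(\theta,r)$, and then estimate by pulling $\|\bw\|_\infty$ out of a sum. First I would recall from the displayed formulas above that
\[
  (I_N^1 w)(\theta) = \tfrac12 a_0(\bw) + \sum_{j=1}^{\frac12 n_1 -1}\big(a_j(\bw)\cos(j\theta) + b_j(\bw)\sin(j\theta)\big) + \tfrac12 a_{\frac12 n_1}(\bw)\cos(\tfrac12 n_1 \theta),
\]
and that, since each summand $\cos(j\theta)$ and $\sin(j\theta)$ is the trace on $\partial B_1$ of the harmonic function $r^j\cos(j\theta)$, $r^j\sin(j\theta)$ respectively (cf. the identities preceding \eqref{solution}), the harmonic extension is obtained simply by inserting the factors $r^j$. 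Hence
\[
  (\Delta_1^{-1} I_N^1 w)(\theta,r) = \tfrac12 a_0(\bw) + \sum_{j=1}^{\frac12 n_1-1} r^j\big(a_j(\bw)\cos(j\theta)+b_j(\bw)\sin(j\theta)\big) + \tfrac12 r^{\frac12 n_1} a_{\frac12 n_1}(\bw)\cos(\tfrac12 n_1\theta).
\]

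\textbf{Identifying the bilinear form.} The next step is to recognize the right-hand side as $\bw^T C\,\bc(\theta,r) + \bw^T S\,\bs(\theta,r)$. Indeed, with $\bc(\theta,r)$ having entries $\tfrac12, r\cos\theta, \dots, r^{\frac12 n_1-1}\cos((\tfrac12 n_1-1)\theta), \tfrac12 r^{\frac12 n_1}\cos(\tfrac12 n_1\theta)$, the vector $C\bc(\theta,r)$ has $\ell$-th component $\sum_j C_{\ell+1,j}(\bc)_j$, and using $a_{j-1}(\bw)=(\bw^T C)_j$ one checks that $\bw^T C\bc(\theta,r)$ reproduces exactly the cosine part of the extension (the $\tfrac12$ in the first and last entries of $\bc$ accounting for the $\tfrac12 a_0$ and $\tfrac12 a_{\frac12 n_1}$ terms); similarly $\bw^T S\bs(\theta,r)$ reproduces the sine part. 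Therefore
\[
  (\Delta_1^{-1} I_N^1 w)(\theta,r) = \bw^T\big(C\bc(\theta,r) + S\bs(\theta,r)\big).
\]

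\textbf{The estimate.} Finally, applying the elementary bound $|\bw^T \bv| \le \|\bw\|_\infty \|\bv\|_1$ (Hölder for the $\ell^\infty$--$\ell^1$ pairing) with $\bv = C\bc(\theta,r)+S\bs(\theta,r) \in \R^{n_1}$, together with $\|\bw\|_\infty = \|w\|_{L^\infty(\partial B_1)}$ (the data vector consists of point values of $w$, and since $w$ is continuous its sup over the grid is at most its sup over $\partial B_1$), gives precisely
\[
  \big|(\Delta_1^{-1} I_N^1 w)(\theta,r)\big| \le \|C\bc(\theta,r) + S\bs(\theta,r)\|_1\, \|w\|_{L^\infty(\partial B_1)},
\]
as claimed. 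I do not anticipate a genuine obstacle here; the only point requiring care is the bookkeeping of the factors $\tfrac12$ in the first and last entries of $\bc(\theta,r)$ and the index ranges ($1\le j\le \tfrac12 n_1+1$ for $C$ versus $1\le j\le \tfrac12 n_1-1$ for $S$), so that the matrix products $C\bc$ and $S\bs$ are dimensionally consistent and faithfully reproduce the trigonometric interpolant and its harmonic extension.
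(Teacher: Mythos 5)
Your proof is correct and follows essentially the same route as the paper's: expand the Fourier interpolant, insert the factors $r^j$ to obtain the harmonic extension, rewrite it as the bilinear form $\bw^T\big(C\bc(\theta,r)+S\bs(\theta,r)\big)$, and conclude with the $\ell^\infty$--$\ell^1$ H\"older bound together with $\|\bw\|_\infty\le\|w\|_{L^\infty(\partial B_1)}$. No discrepancies to report.
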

\begin{proof}
For $(\theta,r) \in B_1$ we have:
\begin{align} 
 & (\Delta_1^{-1} I_N^1w)(\theta,r) \nonumber \\ & =
 \tfrac12 a_0(\bw) + \sum_{j=1}^{\frac12 n_1-1} \big( a_j(\bw) \cos (j \theta) + b_j(\bw)\sin (j \theta)\big)r^j + \tfrac12  a_{\frac12 n_1}(\bw) \cos(\tfrac12 n_1 \theta)r^{\frac12 n_1} \nonumber \\
 & = \sum_{j=1}^{\frac12 n_1+1} a_{j-1}(\bw)\bc(\theta,r)_j + \sum_{j=1}^{\frac12 n_1-1} b_j(\bw)   \bs(\theta,r)_j \label{R5} \\
  & =   \sum_{j=1}^{\frac12 n_1+1} (\bw^TC)_j\bc(\theta,r)_j + \sum_{j=1}^{\frac12 n_1-1} (\bw^TS)_j = \bw^T \big( C \bc(\theta,r)+ S\bs (\theta,r)\big).   \nonumber
\end{align}
From this we obtain the estimate
\begin{equation} \begin{split}
 \big|(\Delta_1^{-1} I_N^1w)(\theta,r)\big|  & \leq \|C \bc(\theta,r)+ S\bs (\theta,r)\|_1\|\bw\|_\infty  \\ & \leq \|C \bc(\theta,r)+ S\bs (\theta,r)\|_1  \|w\|_{L^\infty(\partial B_1)}, \quad (\theta,r)\in B_1.
\end{split} \end{equation}
\end{proof}
\ \\[1ex]
A plot of the function $(\theta,r) \to  \|C \bc(\theta,r)+ S\bs (\theta,r)\|_1$ is shown in Fig.~\ref{FigInterpol}.
\begin{figure}[ht!]
	\centering\includegraphics[width=0.45\textwidth]{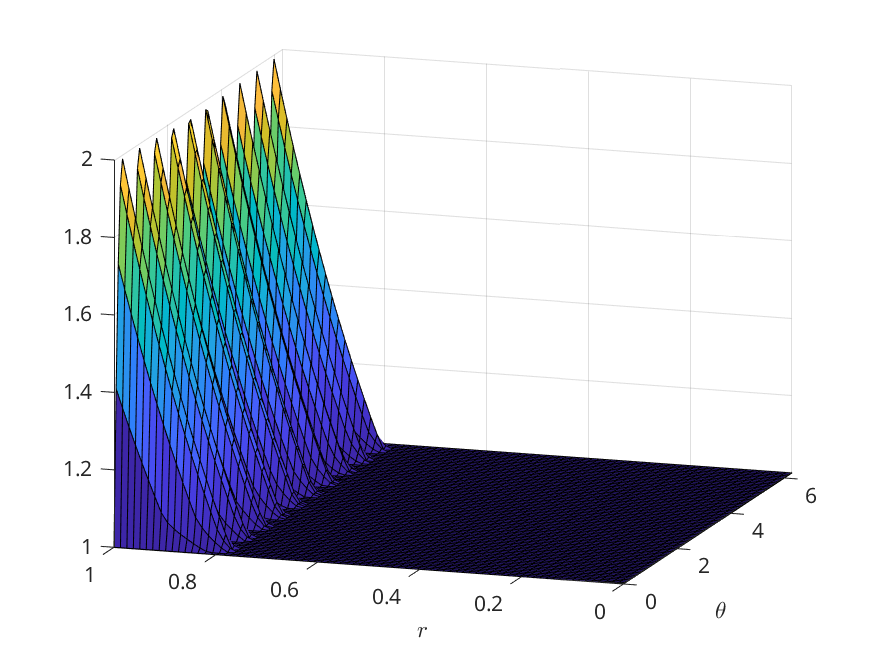}\qquad 
	\includegraphics[width=0.45\textwidth]{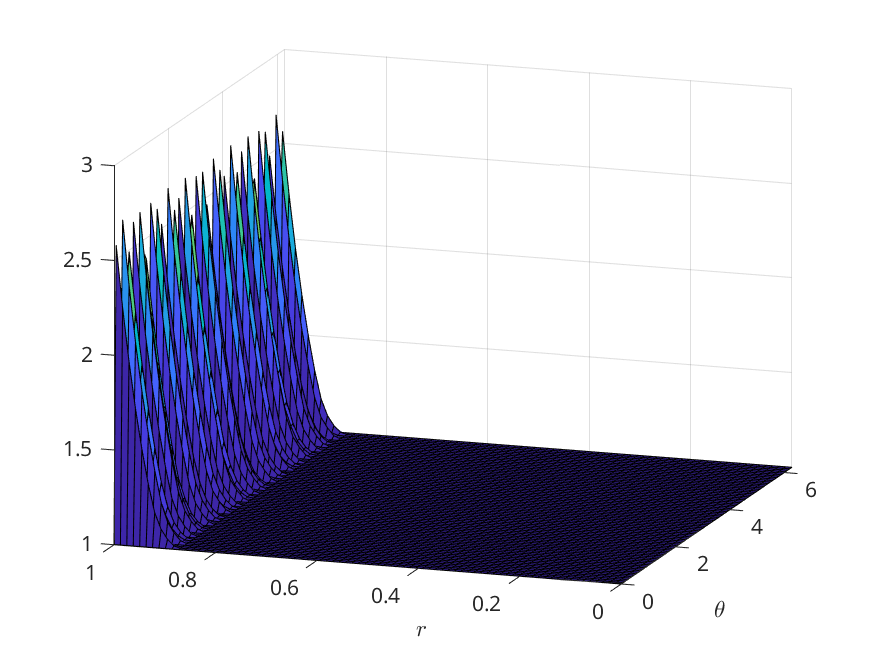}
	\caption{$\|C \bc(\theta,r)+ S\bs (\theta,r)\|_1$ for $n_1=10$ (left) and $n_1=30$ (right). \label{FigInterpol}}
\end{figure}

These results strongly indicate that also for this case, with interpolation instead of projection, we have a maximum principle as derived in Section~\ref{sectNewmaximum}, cf. \eqref{maxvariant}.
\\
For $\bw =(w(X_0), \ldots, w(X_{n_1-1}))^T$ we  introduce the projection $Q\bw \in \R^{n_1}$ with $(Q\bw)_j=w(X_{j-1})$ if $X_{j-1} \in {\rm int}(\Gamma_1)$, $1 \leq j \leq  n_1$, and  zero otherwise. Using \eqref{R5} we now study $L_{1,N}v=R_{\overline{\Gamma}_2}\Delta_1^{-1}I_N^1(0,v)$ for $v \in C_0(\Gamma_1)$. The interpolation of $(0,v)$ is given by
\[
  ((0,v)(X_0), \ldots, (0,v)(X_{n_1-1}))^T=: \bv \in \R^{n_1}.
\]
Due due $v \in C_0(\Gamma_1)$ we have $\bv=Q\bv$ and from \eqref{R5} we obtain
\begin{equation} \label{B5}
  R_{\overline{\Gamma}_2} \Delta_1^{-1} I_N^1 (0,v)= R_{\overline{\Gamma}_2} \bv^T Q\big( C \bc(\theta,r)+ S\bs (\theta,r)\big),
\end{equation}
which yields
\begin{equation} \label{contraction}
      \|L_{1,N}\|_\infty \leq \max_{(\theta,r) \in \overline{\Gamma}_2} \|Q\big( C \bc(\theta,r)+ S\bs (\theta,r)\big)\|_1.   
\end{equation}
\begin{example} \label{Ex6} \rm We determine the term $\|Q\big( C \bc(\theta,r)+ S\bs (\theta,r)\big)\|_1$ for $(\theta,r) \in \overline{\Gamma}_2$, cf. \eqref{contraction}, for specific cases, namely those considered in Example~\ref{ex1}.  First we study the case $m=1.4$, $R=1.2$ and $N=20$. For $N=20$ we have a corresponding $n_1=2(N+1)=42$.  For the grid on $\partial B_2$ we take $n_2:=50 \approx R n_1$. The $(\theta_1^\ast,\theta_2^\ast)$ values corresponding to $m=1.4$, $R=1.2$ are 
$(\theta_1^\ast, \theta_2^\ast)=(0.997,2.37)$. We choose nearby values $(\theta_{1,{\rm int}}^\ast, \theta_{2,{\rm int}}^\ast)=(1.05,2.39)$ on the grids $\big(\ell\frac{2 \pi}{n_1}\big)_{1 \leq \ell < \frac12 n_1}$ and $\big(\ell\frac{2 \pi}{n_2}\big)_{1 \leq \ell < \frac12 n_2}$.  We take the geometry corresponding to this scenario. Note that $\frac{\theta_{2,{\rm int}}^\ast-  \theta_{1, {\rm int}}^\ast}{\pi}=0.44$. We evaluate $\|Q\big( C \bc(\theta,r)+ S\bs (\theta,r)\big)\|_1$ for $(\theta,r)\in \Gamma_2$. As in Example~\ref{ex1}, we parameterize $\Gamma_2$ by using the angle coordinate $\tilde \theta$ in $B_2$, with $\tilde \theta \in [\theta_{2,{\rm int}}^\ast, 2 \pi -\theta_{2,{\rm int}}^\ast]$. We repeat this for $N=40$ with corresponding values $n_1=82$, $n_2=98$, $(\theta_{1,{\rm int}}^\ast, \theta_{2,{\rm int}}^\ast)=(0.996,2.37) $. The results are shown in Fig.~\ref{FigExpIN} (left). In Example~\ref{ex1}  we also considered the case  $m=2.1$, $R=1.2$  (small overlap), which has corresponding angle values $(\theta_1^\ast,\theta_2^\ast)=(0.333,2.87)$ and  contraction number $C_1(\theta_1^\ast,\theta_2^\ast)=0.807$. For this case we also perform the same experiment  for $N=20$ (with $(n_1,n_2)=(42,50)$, $(\theta_{1,{\rm int}}^\ast, \theta_{2,{\rm int}}^\ast)=(0.299,2.89)$) and $N=40$ (with  $(n_1,n_2)=(82,98)$, $(\theta_{1,{\rm int}}^\ast, \theta_{2,{\rm int}}^\ast)=(0.307,2.89)$). The results are shown in Fig.~\ref{FigExpIN} (right). 
\begin{figure}[ht!]
	\centering\includegraphics[width=0.45\textwidth]{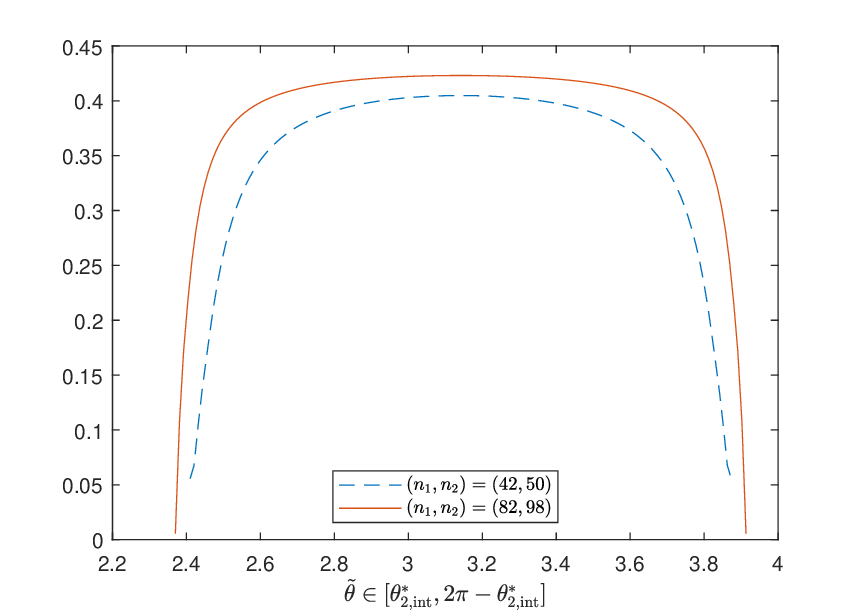}\qquad 
	\includegraphics[width=0.45\textwidth]{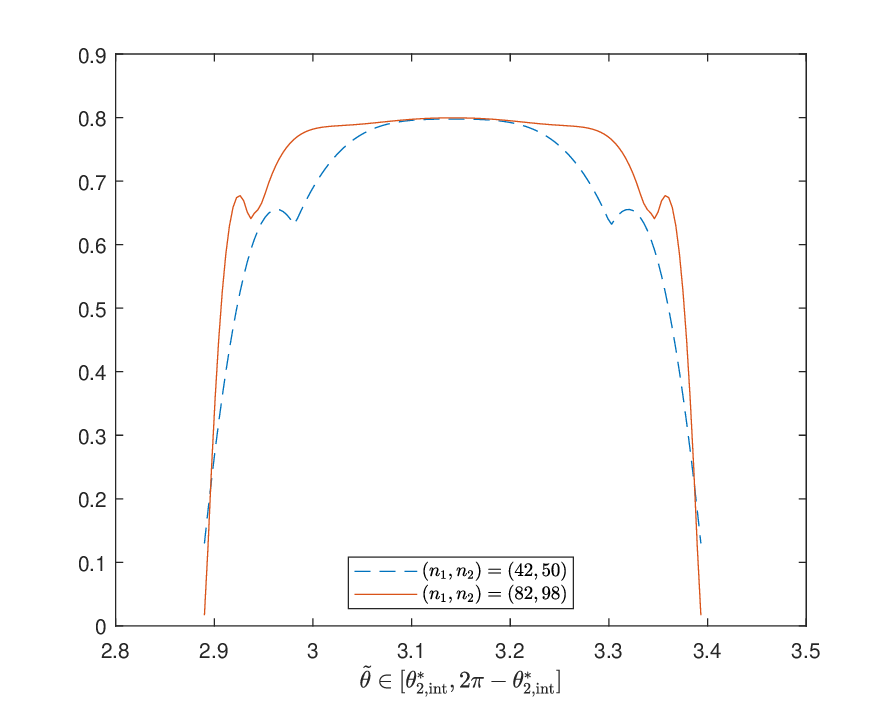}
	\caption{$\|Q\big( C \bc(\theta,r)+ S\bs (\theta,r)\big)\|_1$, evaluated on  $\Gamma_2$, for $m=1.4$, $R=1.2$, and $N=20$, $N=40$ (left), and $m=2.1$, $R=1.2$, and $N=20$, $N=40$ (right). \label{FigExpIN}}
\end{figure}

Comparing these results with the ones shown in Fig.~\ref{FigExpPN} we note the following. Firstly, in Fig.~\ref{FigExpIN} we observe similar nearly constant plateaus as in Fig.~\ref{FigExpPN}. Also the values  attained
at these plateaus are similar and have size approximately $C_1(\theta_1^\ast,\theta_2^\ast)$. Secondly, also in Fig.~\ref{FigExpIN} we observe boundary layers with a width that decreases as a function of $N$. An important difference  between the results in Fig.~\ref{FigExpPN} and in Fig.~\ref{FigExpIN} is that in the former the endpoint values are close to $0.5$, whereas in the latter the endpoint values are 0 (per construction). Finally note that the oscillations observed in the right panel of Fig.~\ref{FigExpIN} are related to the Gibbs phenomenon. 
\end{example}
\begin{example}\label{ex62}\rm  The Gibbs phenomenon turns out to have a strong effect if we consider a case wit a (very) large overlap. To illustrate this we perform an experiment for the case  $R=1.7$, $m=0.75$,
$(\theta_{1}^\ast,\theta_{2}^\ast)=(2.66,2.86)$, $C_1(\theta_{1}^\ast,\theta_{2}^\ast)=0.064$. For $N=20$ we take $(n_1,n_2)=(42,72)$ and interpolated angle values $(\theta_{1,{\rm int}}^\ast,\theta_{2,{\rm int}}^\ast)=(2.69,2.88)$. For $N=40$: $(n_1,n_2)=(82,140)$, $(\theta_{1,{\rm int}}^\ast,\theta_{2,{\rm int}}^\ast)=(2.68,2.87)$. Results are shown in Fig.~\ref{FigExpIN2}. The $\times$-markers correspond to the  values at the grid points $\Gamma_2 \cap G_{2,n_2}$. Note that these values due \emph{not} suffer from the oscillations due to the Gibbs phenomenon.  
\begin{figure}[ht!]
	\centering\includegraphics[width=0.45\textwidth]{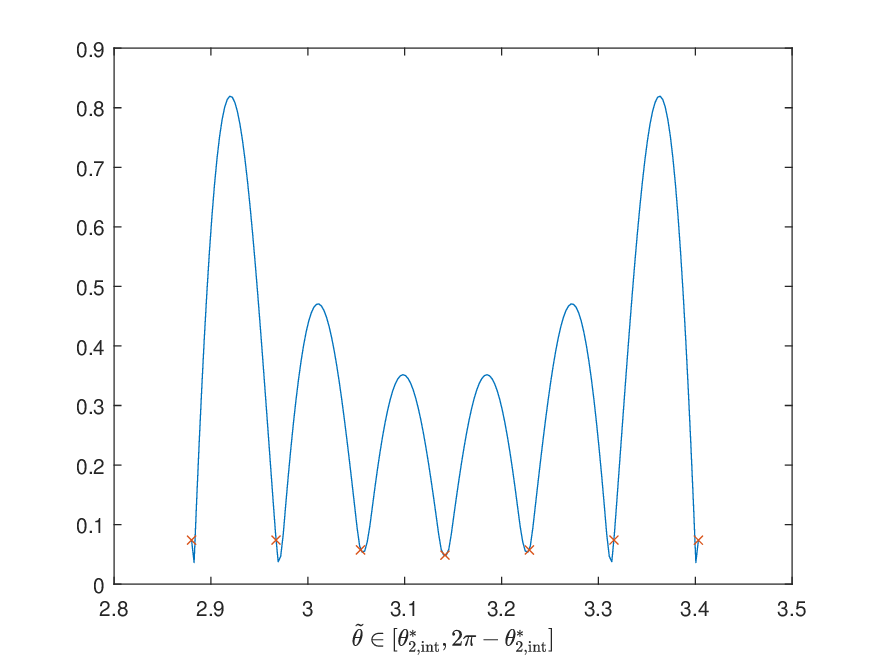}\qquad 
	\includegraphics[width=0.45\textwidth]{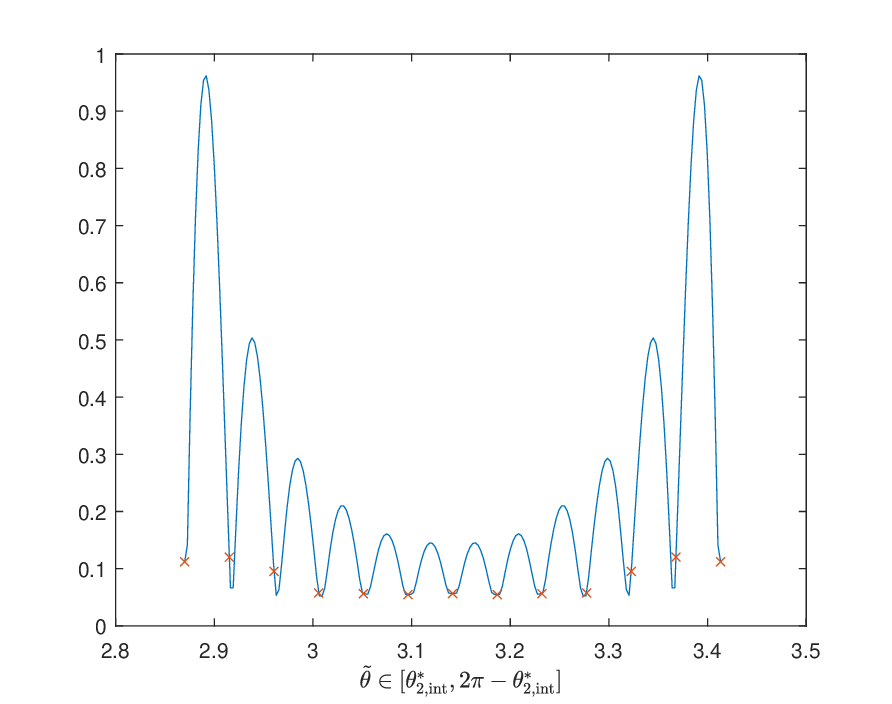}
	\caption{$\|Q\big( C \bc(\theta,r)+ S\bs (\theta,r)\big)\|_1$, evaluated on  $\Gamma_2$, for $R=1.7$, $m=0.75$, and $N=20$  (left), and $N=40$ (right). The '$\times$' are the evaluations at the grid points $G_{2,n_2}$ with $n_2$ the even number closest to $Rn_1$.  \label{FigExpIN2}}
\end{figure}
\end{example}

We now turn to a systematic  parameter study for the contraction number $\|L_{1,N}\|_\infty$. We are particularly interested in the dependence of the contraction number on the parameter $N$ (directly related to the dimension of the discrete Fourier space) and the geometric scenario (small/large overlap).
We note that in the iteration \eqref{DDschwarzinterpolate} we use values only at the interpolation points. This means that the bound in \eqref{contraction} is too pessimistic in the sense that the maximum over $ \overline{\Gamma}_2$ can be replaced by the maximum over the grid $\overline{\Gamma}_2 \cap G_{2,n_2}$. Hence we obtain the contraction number bound:
\begin{equation} \label{contraction2}
      \|L_{1,N}\|_\infty \leq \max_{(\theta,r) \in \overline{\Gamma}_2 \cap G_{2,n_2}} \|Q\big( C \bc(\theta,r)+ S\bs (\theta,r)\big)\|_1 =: C(N,\theta_1^\ast,\theta_2^\ast).
\end{equation}
We first consider  the case of two discs with the same radius. This means that we restrict to $\theta_1^\ast \in [0,\tfrac{\pi}{2})$ and $\theta_2^\ast=\pi- \theta_1^\ast$. For $\theta_1^\ast$ close to $0$ one has a small overlap and for $\theta_1^\ast$ close to $\tfrac{\pi}{2}$ one has a large overlap. For these symmetric scenarios we have $C_1^s(\theta_1^\ast):=C_1(\theta_1^\ast,\pi- \theta_1^\ast)= 1 - \tfrac{2 \theta_1^\ast}{\pi}$ as contraction number bound of the Schwarz method applied to the continuous problem, cf. Theorem~\ref{thmconstraction}. The bound in \eqref{contraction2} is denoted by $C(N,\theta_1^\ast):=C(N,\theta_1^\ast,\pi- \theta_1^\ast)$.  Since the discs have the same radius we take $n_2=n_1=2(N+1)$. We use the same  approach as in Example~\ref{Ex6}: for  given $\theta_1^\ast$ and $N$ we determine $\theta_{1,{\rm int}}^\ast$ and $ \theta_{2,{\rm int}}^\ast:=\pi-\theta_{1,{\rm int}}^\ast$ and compute the bound \eqref{contraction2} with $\theta_i^\ast$ replaced by $\theta_{i, {\rm int}}^\ast$. We take $N \in \{10,20,40,140\}$ and for given $N$ we vary $\theta_1^\ast \in (0,\tfrac{\pi}{2})$.  Results are shown in Fig.~\ref{FigContraction} (left).  We repeat the experiment but now with $B_2$ a ball of radius $1.7$ and $n_2\approx 1.7 n_1$. One then has to vary $\theta_1^\ast \in (0,\pi)$ (from small overlap to large overlap). In that case we have $\theta_2^\ast=\pi- \arcsin (\tfrac{1}{1.7}\sin(\theta_1^\ast))$ and $\tilde C(\theta_1^\ast):=C_1(\theta_1^\ast, \theta_2^\ast)=1-\tfrac{1}{\pi}\big( \arcsin (\tfrac{1}{1.7}\sin(\theta_1^\ast)) +\theta_1^\ast\big)$. Results are shown in Fig.~\ref{FigContraction} (right).
\begin{figure}[ht!]
	\centering\includegraphics[width=0.45\textwidth]{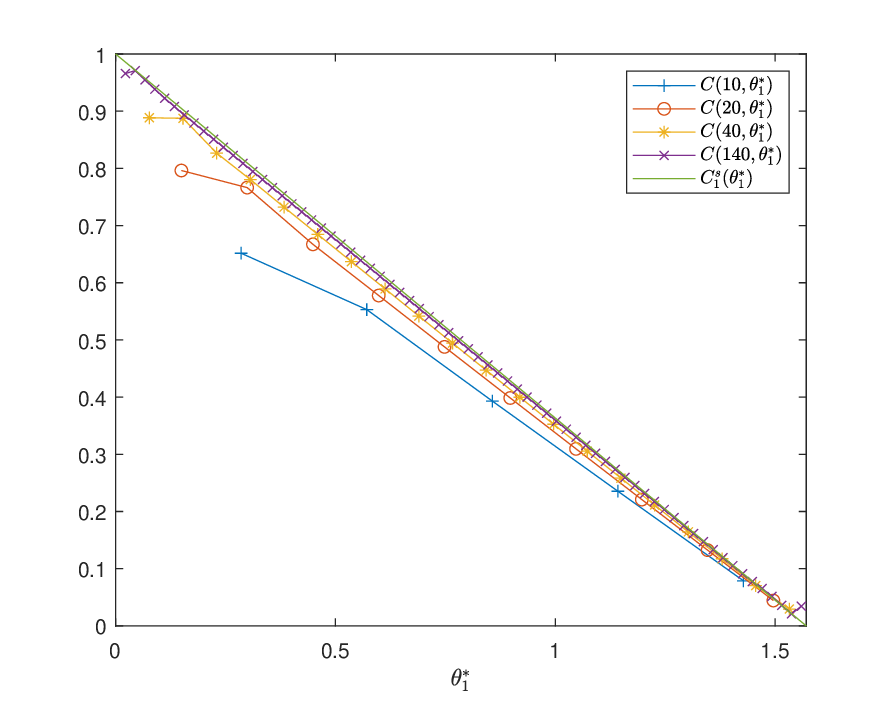}\qquad 
	\includegraphics[width=0.45\textwidth]{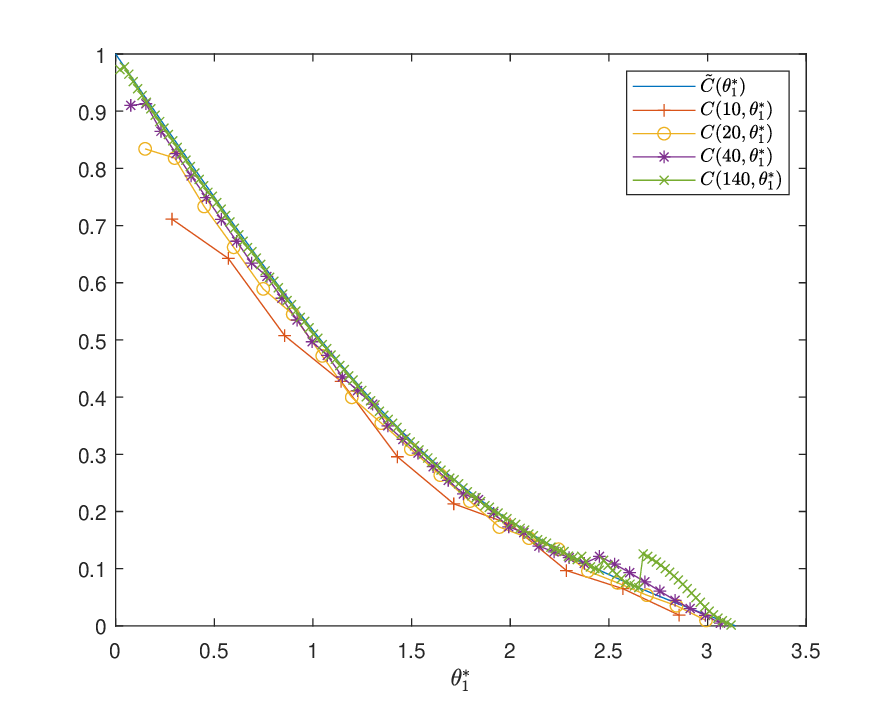}
	\caption{Contraction number bound \eqref{contraction} for $R=1$ and $\theta_{1}^\ast \in (0,\tfrac{\pi}{2})$ (left) and $R=1.7$ and $\theta_{1}^\ast \in (0,\pi)$ (right), with $n_2 \approx R n_1$. \label{FigContraction}}
\end{figure}

We observe that in both examples the values of the contraction number bound $C(N,\theta_1^\ast,\theta_2^\ast)$ are very similar to that of the contraction number bound $C_1(\theta_1^\ast,\theta_2^\ast)$ corresponding to the continuous case. This in particular also holds for small $N$ values. \\
The somewhat irregular behaviour close to $\theta_1^\ast=2.5$ is the right panel in Fig.~\ref{FigContraction} is due to some irregularity in the geometry interpolation procedure $(\theta_1^\ast,\theta_2^\ast) \to (\theta_{1,{\rm int}}^\ast,\theta_{2, {\rm int}}^\ast)$.
\begin{remark} \label{RemEndpoint}
\rm We observe in Fig.~\ref{FigContraction} that the Schwarz-Fourier method with interpolation has, also for small $N$ values, a maximum norm contraction number that tends to $0$ if the geometric scenario tends to a complete overlap of $B_1$ by $B_2$. Such a result can not hold for the Schwarz-Fourier method with projection, cf. Sections~\ref{SectSchwarzF}-\ref{sectNewmaximum}. The reason for this is the behaviour of $R_{\overline{\Gamma}_2}\Delta_1^{-1}P_N^1 \chi_{\Gamma_1}$ in the boundary layers, cf. Fig.~\ref{FigExpPN}. We have (as expected) $(R_{\overline{\Gamma}_2}\Delta_1^{-1}P_N^1 \chi_{\Gamma_1})(z_i) \approx \tfrac12$ and therefore $\tfrac12$ is (close to) a lower bound for boundary layer part $\|R_{\overline{\Gamma}_2}\Delta_1^{-1}P_N^1 \chi_{\Gamma_1}\|_{L^\infty(\overline{\Gamma}_2 \setminus \Gamma_2^{(N)})}$ of the contraction number, cf. \eqref{esti}. 
\end{remark}
\ \\[1ex]
Finally we note that, as can already be seen from the results presented in Fig.~\ref{FigExpIN2}, for having small contraction numbers for larger $\theta_1^\ast$ values, cf.~Fig.~\ref{FigContraction}, it is essential that we take $n_2 \approx Rn_1$, i.e., the grid sizes on the two circles are approximately the same. To illustrate this, for the case $R=1.7$  we perform the same experiment as above but now with $n_2 \approx 1.8 R n_1$ instead of $n_2\approx R n_1$. Results are presented in Fig.~\ref{FigContractionbad}.
\begin{figure}[ht!]
	\centering\includegraphics[width=0.6\textwidth]{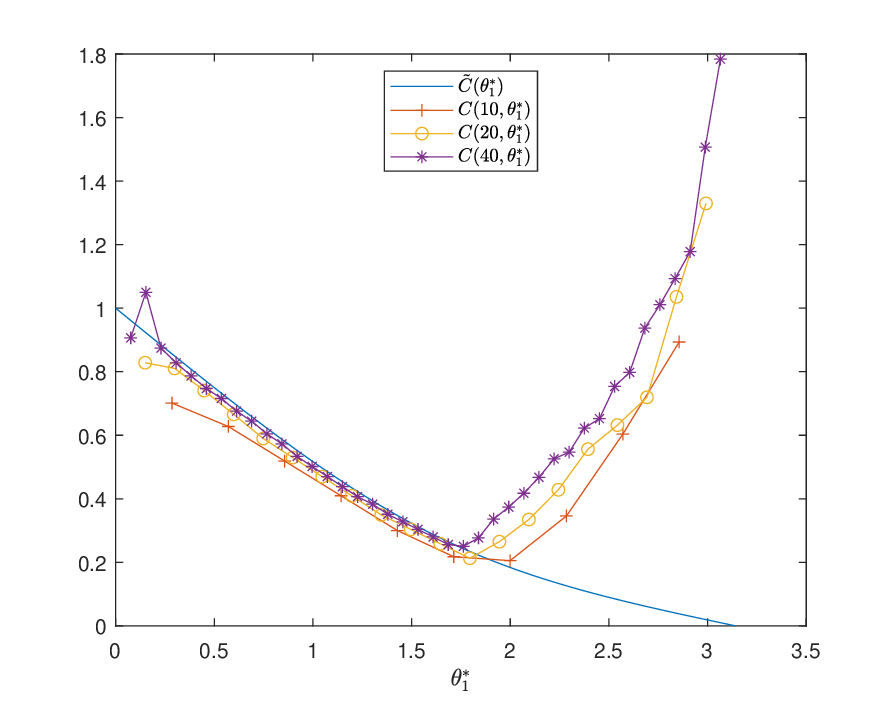}
	\caption{Contraction number bound \eqref{contraction} for  $R=1.7$ and $\theta_{1}^\ast \in (0,\pi)$, with $n_2 \approx 1.8 R n_1$. \label{FigContractionbad}}
\end{figure}

\noindent
{\bf Acknowledgements}\\
The author  acknowledges funding by the Deutsche
Forschungsgemeinschaft (DFG, German Research Foundation) - Project number 442047500 through
the Collaborative Research Center "Sparsity and Singular Structures" (SFB 1481).

\section{Appendix}
We give a proof of the result in Lemma~\ref{lemdisc}. Define $\delta:=g_0^+-g_0^-$, $\ell(\theta):=\frac{\delta}{2 \pi} \theta$ and $g_c(\theta):=g(\theta)+\ell(\theta)$. The function $g_c$ is continuous at $0$ with value $g_c(0)=g_0^+$. Using this continuity property and \eqref{ResK} we obtain
\begin{equation} \label{A1} \begin{split} 
 \lim_{r \uparrow 1} u\big(\theta(r),r\big) & = \lim_{r \uparrow 1}\frac{1}{2 \pi} \int_0^{2 \pi} K\big(\theta(r)-\theta',r\big) \big(g_c(\theta') -\ell(\theta')\big) \, d\theta'\\
 & = g_0^+ -   \lim_{r \uparrow 1}\frac{1}{2 \pi} \int_0^{2 \pi} K\big(\theta(r)-\theta',r\big) \ell(\theta') \, d\theta'.
\end{split}
 \end{equation}
 We analyze the convolution of the linear function $\ell$ with $K$. Using \eqref{Poissonkernel} and partial integration and with the notation $\theta=\theta(r)$ we get  
 \begin{align*}
  \frac{1}{2 \pi} \int_0^{2 \pi} K\big(\theta(r)-\theta',r\big) \ell(\theta') \, d\theta' &= \frac{\delta}{(2 \pi)^2} \int_0^{2 \pi} \big( 1+ 2 \sum_{n=1}^\infty \cos\big(n(\theta-\theta')\big) r^n\big) \theta' \, d\theta' \\ &= \tfrac12 \delta  + \frac{\delta}{2 \pi^2}\sum_{n=1}^\infty r^n \int_0^{2 \pi}  \cos\big(n(\theta-\theta')\big)  \theta' \, d\theta' \\
  &=  \tfrac12 \delta - \frac{\delta}{2 \pi^2}\sum_{n=1}^\infty r^n  \frac{1}{n}  \sin\big(n(\theta-\theta')\big) \theta' \Big\rvert_{0}^{2 \pi} + 0 \\
  &= \tfrac12 \delta - \frac{\delta}{\pi}\sum_{n=1}^\infty   \frac{r^n}{n}  \sin(n\theta).  
 \end{align*}
Combining this with \eqref{A1} we obtain
\begin{equation} \label{A2} 
 \lim_{r \uparrow 1} u\big(\theta(r),r\big) = \tfrac12 (g_0^++g_0^-) +\frac{\delta}{\pi}\lim_{r \uparrow 1}\sum_{n=1}^\infty   \frac{r^n}{n}  \sin\big(n\theta(r)\big).  
\end{equation}
We further analyze the series $\sum_{n=1}^\infty   \frac{r^n}{n}  \sin(n\theta)$, $\theta=\theta(r)$:
\begin{equation} \label{A3} \begin{split}
 \sum_{n=1}^\infty   \frac{r^n}{n}  \sin(n\theta) &=  \sum_{n=1}^\infty   \frac{1}{n} \frac{1}{2i}\big(e^{in\theta}-e^{-i n \theta}\big) r^n \\
 &  =  \frac{1}{2i}\sum_{n=1}^\infty   \frac{1}{n}\left(\Big(re^{in\theta}\Big)^n-\Big(r e^{-i n \theta}\Big)^n\right) \\ & = \frac{1}{2i} \big( \ln(1-re^{-i\theta})-\ln(1-re^{i\theta}) \big).
 \end{split}
\end{equation}
Note $1-re^{-i\theta}= 1-r \cos\theta +i r \sin\theta= \tilde r e^{i \tau}$ with $\tilde r= (1-2 r \cos \theta +r^2)^\frac12$, $\tau= {\rm sign}(\theta) \arccos \big( \frac{1-r \cos \theta}{\tilde r}\big)$. This yields
\begin{align*}
 \frac{1}{2i} \big( \ln(1-re^{-i\theta})-\ln(1-re^{i\theta}) \big) &= \frac{1}{2i} \big( \ln(\tilde r e^{i \tau})-\ln(\tilde re^{-i\tau}) \big) \\
 & = \frac{1}{2i} \big( \ln \tilde r + i \tau - \ln \tilde r + i \tau \big) =\tau. 
\end{align*}
Combining  this with \eqref{A2}- \eqref{A3} we obtain
\begin{equation} \label{A4}
\lim_{r \uparrow 1} u\big(\theta(r),r\big) = \tfrac12 (g_0^++g_0^-) +\frac{\delta}{\pi}{\rm sign}(\theta)\lim_{r \uparrow 1}\arccos \Big( \frac{1-r \cos \big( \theta(r)\big)}{\big(1-2 r \cos \big(\theta(r)\big) +r^2\big)^\frac12}\Big).
\end{equation}
With $\theta=\theta(r)$ we have
\begin{equation} \label{A5} \begin{split}
 \frac{1-r \cos \theta}{(1-2 r \cos\theta +r^2)^\frac12}& =  \frac{1- \cos \theta + (1-r) \cos \theta}{((1-r)^2 + 2 r (1-\cos\theta))^\frac12} \\
 & = \frac{ 2 \sin^2(\tfrac12 \theta) + (1-r) \cos \theta}{((1-r)^2 + 4 r \sin^2(\tfrac12 \theta))^\frac12}\\
 &= \frac{ \cos \theta + 2 \sin(\tfrac12 \theta) f(\theta,r)}{\big(1+ 4 r f(\theta,r)^2\big)^\frac12},
\end{split}
\end{equation}
with $f(\theta,r):= \frac{\sin(\tfrac12 \theta)}{1-r}$. Now note that $\sin\big(\tfrac12 \theta(1)\big)=0$ and from a linear Taylor expansion at $r=1$ we obtain $ \sin\big(\tfrac12 \theta(r)\big)=\tfrac12 (r-1)\cos\big(\theta(\xi_r)\big) \theta'(\xi_r)$, with $r \leq \xi_r <1$. This implies $\lim_{r \uparrow 1} f(\theta(r),r)=- \tfrac12 \theta'(1)$. Using this in \eqref{A5} yields
\[
 \lim_{r \uparrow 1}\arccos \Big( \frac{1-r \cos \big( \theta(r)\big)}{\big(1-2 r \cos \big(\theta(r)\big) +r^2\big)^\frac12}\Big) = \arccos \big( \big( 1+ \theta'(1)^2\big)^{-\frac12}\big),
\]
and combining this with \eqref{A4} completes the proof. 

\bibliographystyle{siam}
\bibliography{literatur}{}

\end{document}